\RequirePackage{fix-cm}
\documentclass[smallextended]{svjour3}       % onecolumn (second format)
\smartqed  % flush right qed marks, e.g. at end of proof
\usepackage{amssymb,latexsym}
\usepackage{graphicx}
\usepackage{amsmath}
\usepackage[dvipsnames]{xcolor}
\usepackage{hyperref}
\usepackage{amsmath}
\usepackage{amssymb}

\begin{document}

\title{Generalized Laura-Andoyer equations and the enumeration of some symmetrical classes of Dziobek configurations 
}
%\subtitle{Do you have a subtitle?\\ If so, write it here}

\titlerunning{Generalized Laura-Andoyer equations and Symmetrical Dziobek Configurations}        % if too long for running head

\author{ Thiago Dias \and Ya-Lun Tsai       
         %etc.
}

%\authorrunning{} % if too long for running head

\institute{   Thiago Dias \at
           Departamento de Matem\'atica, Universidade Federal Rural de Pernambuco, av. Dom Manoel de Medeiros s/n, Dois Irm\~aos - Recife - PE
52171-900, Brasil
             \email{thiago.diasoliveira@ufrpe.br}           
\and   
Ya-Lun Tsai  \at
 Department of Applied Mathematics, National Chung Hsing University, Taichung, Taiwan 
           \email{yltsai@nchu.edu.tw}    
}

\date{Received: date / Accepted: date}
% The correct dates will be entered by the editor

\maketitle

{\it{The authors dedicate this work to Alain Chenciner on the occasion of his 80th birthday. %Your existence and uniqueness brings us joy.}}
\vspace{0.5cm}

\begin{abstract}
We study the symmetrical Dziobek configurations where, in $\mathbb{R}^{d}$, there are $d$ bodies with unit masses at the vertices of a regular $(d-1)$-dimensional simplex of unit edge length and two more bodies with nonzero masses $s,k$ are on the line passing through the center of the simplex and being orthogonal to it. 

In the case of logarithmic potential finiteness is proved for all $s,k\neq 0, d>1$, and we obtain the bifurcation surface in the $(s,k,d)$-space through Gröbner basis computation. Using cylindrical algebraic decompositions, we find $197232$ sample points in the complement of the bifurcation surface. We propose a method to reduce the number to only $202$. By Hermite's root counting theorem, we find that, generically, there can be $0,1,2,3$ or $4$ concave, $1,2,3, $ or $4$ convex, and in totality, $1,2,3,4$ or $5$ such configurations for all dimensions $d>1$. For positive $s$ and $k$, generically, there is a unique convex configuration, while the number of concave ones can be $0,2$ or $4$. All possible combinations for the numbers described above are realized when $d=2$.

We obtain a set of generalized Laura-Andoyer equations equivalent to the central configurations equations for all fixed number of bodies $n=d+h$ and configuration dimension $d$. For homogeneous force law with exponent $a\in \mathbb{R}$, we use the action of permutation group $S_d$ in the Laura-Andoyer equations to reduce the equivalent $\binom{d+2}{2}\binom{d}{2}$ Laura-Andoyer equations to only two generalized polynomial algebraic equations for the studied class of symmetric configurations with two variables representing the positions of the two bodies not at the vertices of the simplex in four parameters $a,d,s,k$.

\keywords{$N$-body problem \and Laura-Andoyer Equations \and Central configurations \and Group action \and Gr\"obner basis \and Hermite quadratic forms \and Bifurcation surface}
% \PACS{PACS code1 \and PACS code2 \and more}

[2010]{Primary 37N05, 70F10, 70F15, 76B47; Secundary  13P10, 13P15, 14A10. }
\end{abstract}

\section{Introduction}
Central configurations of the $n$-body problem is an important topic. According to Saari, \cite{S} ``Central configurations play a particularly central role in the study of $n$-body systems.'' Enumerating classes of central configurations has been known as a challenging problem. Smale's 6th problem for the 21st century is to prove the finiteness of planar central configurations \cite{SS}.  

As in \cite{A}, we consider the $n$-body problem and central configurations for homogeneous force law with exponent $a\in \mathbb{R}$ (not just for the Newtonian potential of $a=-\frac{3}{2}$) and $n$ particles with nonzero masses (not restricted to be positive) are in $\mathbb{R}^{d}$ for any integer $d>0$. Other references discussing properties for general potentials can be found in \cite{ASV,D,L1,SMPV}. 

When one consider configurations of $n$ bodies in $\mathbb{R}^{d}$, it is natural to restrict $d$ in between $1$ and $n-1$ since $n$ bodies span an affine space of dimension at most $n-1$. Central configurations for $n$ bodies in $\mathbb{R}^{n-1}$ (or, equvialently, for $d+1$ bodies in $\mathbb{R}^{d}$) is well known. It is exactly when $d+1$ bodies are at the vertices of a regular $d$-dimensional simplex for all $a\neq 0$ and for all nonzero masses \cite{A}. On the other hand, central configurations for $d+2$ bodies spanning $\mathbb{R}^{d}$ are far from well known. Such central configurations are called \emph{Dziobek configurations}. Some results about them can be found in \cite{A,L,L1,L2,M,T2}. 

In this paper, we consider Dziobek configurations that contain a sub configuration of a regular simplex. Specifically, we consider the families of symmetrical configurations in $\mathbb{R}^{d}$ where there are $d$ bodies with unit masses at the vertices of a regular $(d-1)$-dimensional simplex of unit edge length, and two more bodies with masses $s,k$ on the line passing through the center of the simplex and being orthogonal to it. Figures $1$ and $2$ show the cases of $d=2,3$ for convex configurations. It is in \cite{L} where Leandro first considered such configurations and called them $(2,d)$-cc's. We follow this terminology here.  

In \cite{L}, Leandro proves the finiteness of $(2,d)$-cc's for $a=-\frac{3}{2}$, all $d>1$, and masses $s,k>0$. For $d=2,3$, he also counts the numbers of such configurations with $s,k>0$. Generically, there are $0,2,4$ concave configurations, while there is always one convex configuration for both $d=2$ and $d=3$. Some exact counts for mass parameters at the bifurcation points are given in \cite{T}.

\begin{figure}
\centering
\begin{minipage}{.50\textwidth}
  \centering
  \includegraphics[width=.72\linewidth]{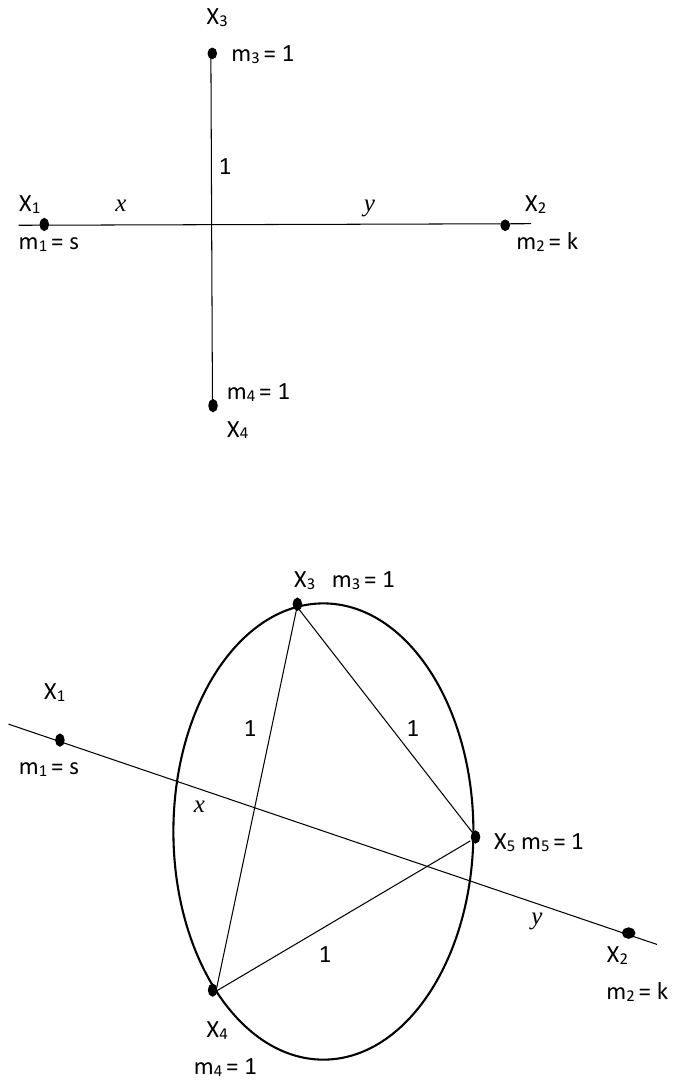}
  \caption{}{Four bodies form a convex symmetric configuration in  $\mathbb{R}^2$}
\end{minipage}%
\begin{minipage}{.50\textwidth}
  \centering
  \includegraphics[width=.72\linewidth]{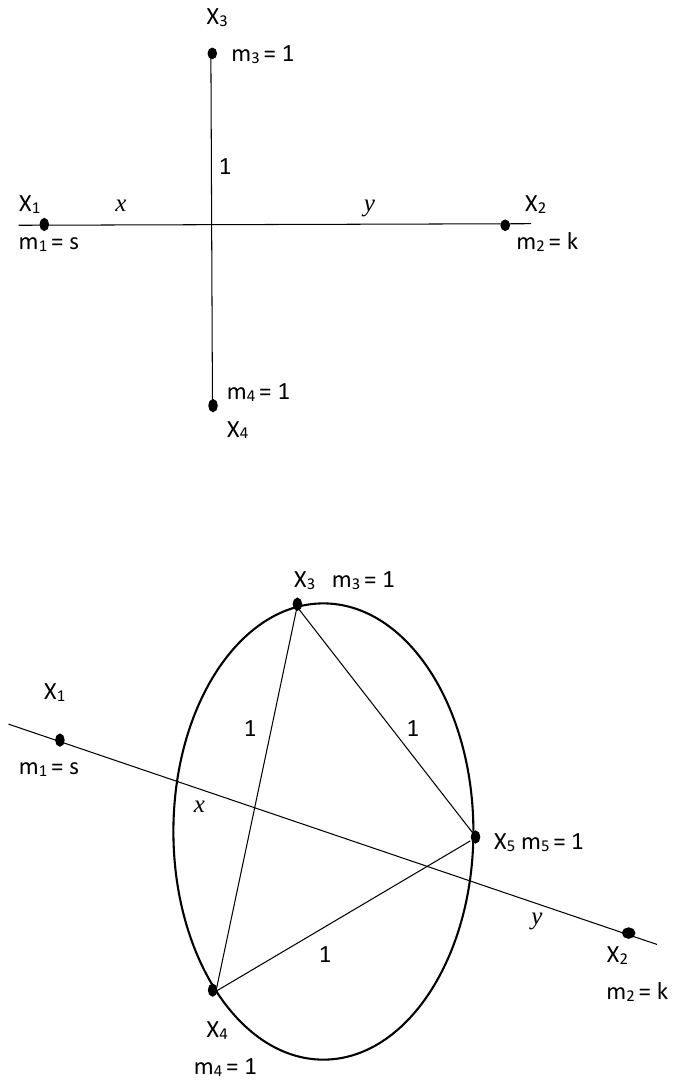}
  \caption{}{Five bodies form a convex symmetric configuration in  $\mathbb{R}^3$}
\end{minipage}
\end{figure}

.

Besides the Newtonian potential of $a=-\frac{3}{2}$, another interesting landmark is when $a=-1$. Planar central configurations for $a=-1$ give special solutions to the $n$ point-vortex problem \cite{O}. Such potential is called the \emph{logarithmic potential} from the defining formula of it. It is believed such potential give a simpler system than that of the Newtonian one and it is made comparisons to the Newtonian case \cite{AC,AC1}. In this paper, we consider the $(2,d)$-cc's for the logarithmic potential not only to gain a better understanding of such Dziobek configurations for all dimensions at a specific $a$ but also in the hope to form conjectures for the Newtonian potential.

Even for the case of $a=-1$, as we will see, there are many challenging and interesting ingredients in enumerating $(2,d)$-cc's for two nonzero masses $s,k$ and for all dimensions $d>1$. Such problem can be reduced to the problem of counting real roots for a system of four polynomial equations in four variables with three parameters. To classify the parameters in terms of the numbers of real roots, we need to find the bifurcation surface in the $(s,k,d)$-space, pick at least one point from all the open connected components separated by it, and count the real roots for the systems at sample parameters.

To rigorously find the bifurcation surface, we employ the symbolic computational method provided in \cite{T}. The computation makes use of Gr\"obner bases and Hermite quadratic forms involving parameters. Vanishing of the determinant of a symmetric matrix for the quadratic form provides a necessary condition for parameters where the number of real zeros changes. Therefore, we can define the bifurcation surface as the zero set of the numerator of the determinant of such matrix, called the \emph{bifurcation polynomial $g$}. 

In our case, the bifurcation polynomial $g$ is a polynomials in $s,k,d$ with degrees $13,13,21$, respectively, with $1607$ terms. It is obtained from computing the determinant of a $12\times 12$ matrix with entries in $\mathbb{Q}(s,k,d)$. With such $g$, we apply the cylindrical algebraic decomposition introduced by Collins \cite{C} that uses exact integer computation to obtain at least one sample point in each open connected component of $g\neq 0$. After about twelve hours of the computation, a list of $197232$ sample points with rational coordinates are obtained. 

We are able to reduce the number $197232$ to only $202$ by proposing an algorithm that refines the list of sample points obtained through cylindrical algebraic decomposition by removing some points belonging to the same component. Our algorithm is based on the depth-first searching for connecting points with some data-preprocessing to employ the divide-and-conquer technique. A program written with Mathematica $11$ implementing our algorithm is provided in \cite{link}. By doing so, we sharpen the upper bound for the number of open connected components to $202$ and also save some time in counting real zeros for those components. This method is one of the main contributions of this paper and it can help solve other problems about counting real solutions of systems of algebraic equations with three parameters.

For those $202$ rational sample points, we again use Gr\"{o}bner bases and Hermite quadratic forms to rigorously count the numbers of real common zeros for integer polynomial systems. We obtained that, generically, there can be $0,1,2,3$ or $4$ concave, $1,2,3,$ or $4$ convex, and in totality $1,2,3,4$ or $5$ $(2,d)$-cc's. For positive $s$ and $k$, generically, there is a unique convex configurations, while the number of concave ones can be $0,2$ or $4$. All possible combinations for the numbers described above are realized in the dimension $d=2$. 

This paper also studies the equivalence between Laura-Andoyer and central configurations equations. G. Meyer derives Laura-Andoyer equations in the planar and spatial cases \cite{ME}. Hagihara proves the equivalence between Laura-Andoyer and central configurations equations in the planar case when the center of mass is at the origin \cite{H}. For proof of this result without assumptions about the center of mass, see \cite{FM} and \cite{B}. Hamptom and Santropete derive a generalization of the Laura-Andoyer equations for central configurations of all dimensions \cite{HS}. We derived generalized Laura-Andoyer equations that are very similar to the ones obtained by Hampton and Santropete. Our contribution is to prove that the generalized  Laura-Andoyer equations are equivalent to the central configuration equations for each choice of the number of bodies and the dimension. As far as we are aware, this is the first equivalence result between Laura-Andoyer equations and central configurations equations with dimension $d>2$. For interesting historical remarks about Laura-Andoyer equations and Dziobek equations, see \cite{A2}

Our paper is divided into two main sections. In section $2$, we derive the algebraic equations involving two equations, two variables, and four parameters and present the enumeration results. Also, we outline our method for computing the bifurcation surface. We give an algorithm to reduce the number of sample points and present the proof of the enumeration results. All of our symbolic computations are carried out in Mathematica $11$. The Mathematica notebook containing all the implementations of our algorithms and the computations can be found in the supplementary material. In section $3$, we derive the generalized Laura-Andoyer equations and discuss the obtaining sets of Laura-Andoyer equations reduced by symmetry. We exemplify it in the case of $(2,d)$-cc's. 
 By using the compatibility of the Laura-Andoyer equations with the symmetry of $(2,d)$-ccs, we reduce the equivalent $\binom{d+2}{2}\binom{d}{2}$ Laura-Andoyer equations to only two algebraic equations with two real variables representing the positions of the two bodies not at the vertices of the simplex in four parameters $a,d,s,k$. Such a technique is different from that in \cite{L} and can be applied to study non-Dziobek classes of symmetrical central configurations.

\section{Enumeration problem and Main Results}

Consider $n$  punctiform  bodies with  $m_{1},...,m_{n} \in \mathbb{R}$ with positions $x_{1},...,x_{n} \in \mathbb{R}^{d}$ and nonzero mutual distances $r_{ij}=\|x_{i}-x_{j}\|$, interacting under a potential of the type

\begin{equation}
U_{a}(x)=\frac{1}{2a+2}\displaystyle \sum_{1\leq i <j \leq n} m_{i}m_{j}r_{ij}^{2a+2}, \text{ if }  a\in \mathbb{R}\setminus\{-1\}, \text{ or}
\end{equation}

\begin{equation}
U_{a}(x)=\displaystyle\sum_{1 \leq i < j \leq n}m_im_j\log r_{ij},
\end{equation}
if $a=-1.$ The equations of motion are given by

\begin{equation}\label{eqN}
m_{i}\ddot{x_{i}}=\displaystyle \sum_{\begin{subarray}{c}j=1\\ j\neq i\end{subarray}}^{n}m_{i}m_{j}(x_{j}-x_{i})r_{ij}^{2a}=\frac{\partial U}{\partial x_{i}},~i=1,...,n.
\end{equation}

By simplicity, define the quantities $R_{ij}=R_{ji}=r_{ij}^{2a}$ and
\begin{equation}\label{eqN3}
\gamma_i= \sum_{\begin{subarray}{c}j=1\\ j\neq i\end{subarray}}^{n}m_{j}R_{ij}(x_j-x_i).
\end{equation}
A central configuration $x$, associated to the potential $U_a$ satisfies the system
\begin{equation}\label{eqc} 
\gamma_i=\lambda(x_i-c),\quad i=1,...,n,
\end{equation}
for which
$$c=\frac{1}{M}\left(m_1x_1+\cdots m_nx_n\right) \qquad \text{and} \qquad M=m_1+\cdots m_n\neq 0$$
are, respectively, the  \emph{center of mass} and the \emph{total masses}.

 Leandro derived the system of a equations a system of algebraic equations for $(2,d)$-cc's with $a=-\frac{3}{2}$ equivalent to  the central configuration equations $\eqref{eqc}$. This system was numbered by $(10)$ in \cite{L}.

 Without loss of generality, for our $d+2$ bodies in $\mathbb{R}^{d}$, we assume $x_{1}=(z,0,\dots,0),x_{2}=(w,0,\dots,0)$ with $z>w$, and $x_{j+2}=(0,\delta_{j})$, where $j=1,\dots,d$ and $\{\delta_{1},\dots,\delta_{d}\}\subset \mathbb{R}^{d-1}$ form a regular $(d-1)$-simplex with center at the origin and unit side length. Let $m_{1}=s, m_{2}=k, m_{j+2}=1$ for $j=1,\dots,d$.
 
 Following Leandro \cite{L}, we obtain the following system of algebraic equations for $(2,d)$-cc's with $a \in \mathbb{R}$:  
 
 \begin{equation}\label{algsys}
  \begin{cases}
   k((z-w)^{2a}-(\frac{d-1}{2d}+w^{2})^{a})(z-w)+((\frac{d-1}{2d}+z^{2})^{a}-1)zd=0,\\
   s((z-w)^{2a}-(\frac{d-1}{2d}+z^{2})^{a})(z-w)-((\frac{d-1}{2d}+w^{2})^{a}-1)wd=0.\\
   \end{cases}
\end{equation}

Let $a=-1$, $r_{1}=z,r_{2}=w,r_{3}=(\frac{d-1}{2d}+z^{2})^{-1},r_{4}=(\frac{d-1}{2d}+w^{2})^{-1}$, the system (\ref{algsys}) becomes a polynomial system (\ref{polygsys}) given below.

\begin{equation}\label{polygsys}
  \begin{cases}
   -k + d r_{1}^2 - d r_{1} r_{2} - d r_{1}^2 r_{3} + d r_{1} r_{2} r_{3} + k r_{1}^2 r_{4} - 2 k r_{1} r_{2} r_{4} + k r_{2}^2 r_{4}=0,\\
   - s + d r_{2}^2 -d r_{1} r_{2}  - d r_{2}^2 r_{4} + d r_{1} r_{2} r_{4}    + sr_{2}^2 r_{3} - 2s r_{1} r_{2} r_{3}  + sr_{1}^2 r_{3}=0,\\
-2 d - r_{3} + d r_{3} + 2 d r_{1}^2 r_{3}=0,\\
-2 d - r_{4} + d r_{4} + 2 d r_{2}^2 r_{4}=0. 
   \end{cases}
\end{equation} 

Recall that the bodies with masses $s$ and $k$ have coordinates $(r_{1},0,\dots,0)$ and $(r_{2},0,\dots,0)$, respectively, in $\mathbb{R}^{d}$, while the remaining $d$ bodies at the vertices of a regular simplex with the center at the origin all have first coordinates of zero. It is easy to see that there is no real zeros for (\eqref{polygsys}) with $r_{1}=r_{2}$ or $r_{1}=0$ or $r_{2}=0$. Note that $r_{1},r_{2}\neq 0$ means that placing the body with mass $s$ or $k$ at the center of the regular simplex cannot form a $(2,d)$-cc.

By symmetry, the numbers of total $(2,d)$-cc's are the numbers of common real zeros of (\ref{polygsys}) with $r_{1}>r_{2}$. It is possible that $r_{1}>r_{2}>0$, $0>r_{1}>r_{2}$, or $r_{1}>0>r_{2}$. The first case corresponds to configurations where the body with mass $k$ is in the convex hull formed by the remaining bodies, the second case corresponds to configurations where the body with mass $s$ is in the convex hull formed by the remaining bodies, and the third case corresponds to configurations where the bodies with mass $s$ and mass $k$ are in the different half regions separated by the hyperplane containing the $d$ bodies at the vertex of the regular simplex. Therefore, concave configurations correspond to real zeros with $r_{1}>r_{2}>0$ or $0>r_{1}>r_{2}$, while convex configurations correspond to real zeros with $r_{1}>0>r_{2}$.  

Here are the main results of the paper.
\begin{theorem} \label{maint}
For all non-zero real numbers $s,k$ and integers $d>1$, there are at most $12$ complex zeros for system (\ref{polygsys}). There are generically $0,1,2,3,4$ real zeros with $r_{1}>r_{2}>0$ or $0>r_{1}>r_{2}$, and $1,2,3,4$ real zeros with $r_{1}>0>r_{2}$. The total numbers of real zeros with $r_{1}>r_{2}$ are generically $1,2,3,4,5$. For $s,k>0$, there are generically $0,2,4$ real zeros with $r_{1}>r_{2}>0$ or $0>r_{1}>r_{2}$, and one zero with $r_{1}>0>r_{2}$. All the possibilities are realized for $d=2$.
\end{theorem}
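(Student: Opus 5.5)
The plan is computer-assisted but rigorous, in the spirit of the method of \cite{T}. We treat $s,k,d$ as parameters and work in the field $\mathbb{Q}(s,k,d)$.

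\textbf{Finiteness and the bound $12$.} Compute a Gr\"obner basis $G$ of the ideal generated by the four polynomials of (\ref{polygsys}) in $\mathbb{Q}(s,k,d)[r_1,r_2,r_3,r_4]$, using a graded reverse lexicographic order. The aim is to show that the quotient algebra $A=\mathbb{Q}(s,k,d)[r]/I$ is finite dimensional with a monomial basis of $12$ standard monomials. This gives at most $12$ complex zeros for generic parameters.

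To pass to \emph{all} nonzero $s,k$ and integers $d>1$, track the finitely many denominators and leading coefficients appearing during the Buchberger computation. For each factor of these, restrict the parameters to its zero set intersected with $\{s k\neq 0,\ d>1\}$ and recompute the basis on that stratum, checking that it stays zero dimensional with at most $12$ standard monomials. An alternative is to exhibit, for each variable, a univariate polynomial in the ideal whose leading coefficient is a power of $d$ times a nonvanishing factor. The third and fourth equations already fix $r_3$ and $r_4$ rationally in terms of $r_1,r_2$, since $r_3(d-1+2dr_1^2)=2d$. So effectively the system is two equations in $r_1,r_2$ and Bezout-type degree counts should confirm the bound.

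\textbf{Real root counting.} On $A$, form the Hermite quadratic forms $Q_h(f)=\mathrm{Tr}(h f^2)$ for $h=1$, $h=r_1 r_2$, $h=r_1-r_2$ and products of these. Their signatures, combined linearly as in the standard Hermite/Pedersen–Roy–Szpirglas sign-counting scheme, give the number of real zeros satisfying each sign condition on $r_1$, $r_2$ and $r_1-r_2$. That is exactly the number of concave configurations ($r_1>r_2>0$ or $0>r_1>r_2$), convex configurations ($r_1>0>r_2$), and total configurations with $r_1>r_2$. The counts can only change where $\det Q_1$ (the discriminant-type quantity) or the determinants for the sign polynomials vanish, or on $sk=0$. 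Take the numerator of these determinants, factored and with spurious factors removed, as the bifurcation polynomial $g(s,k,d)$.

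Next run a cylindrical algebraic decomposition of $\mathbb{R}^3$ with respect to $g$ and $sk$. This yields rational sample points in every open cell of $\{g\neq0\}$ with $d>1$; "generically" then means off $g=0$. Reduce the sample list by connecting sample points with paths that avoid $g=0$, since points in the same component have the same counts. At each surviving point, evaluate the Hermite matrices exactly over $\mathbb{Q}$ and compute signatures. Collecting the resulting triples gives the sets $\{0,\dots,4\}$, $\{1,\dots,4\}$ and $\{1,\dots,5\}$. Restricting to sample points with $s,k>0$ gives $\{0,2,4\}$ concave and exactly $1$ convex. Finally, sample points with $d=2$ realize every listed value, which can be confirmed by direct exact counting at $d=2$.

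\textbf{Main obstacle.} The hard part is covering \emph{every} component of $\{g\neq0\}$ in the $(s,k,d)$-space. The CAD is heavy because $g$ has degree up to $21$ and many terms, and it produces an enormous sample set. Deciding rigorously that two samples lie in the same component is also nontrivial. I would first try piecewise-linear paths whose segments are certified to miss $g=0$, using exact real-root isolation of $g$ restricted to each segment. Only then would I fall back on adjacency information from the CAD.

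A secondary difficulty is that integrality of $d$ is a boundary issue: CAD sample points have rational $d$. This is harmless only because each component meets the strip $d>1$ in an interval that contains integers. That fact is needed for the "all integers $d>1$" claim, and I would verify it separately by checking the $d$-projection of each component.
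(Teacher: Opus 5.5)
Your overall pipeline matches the paper's: a parametric Gr\"obner basis, Hermite forms, CAD, segment-connection of samples, and exact counts at the samples. The gap is in the real-root-counting step. You assert that the counts can change only where the numerators of the Hermite determinants vanish or on $sk=0$, and you build the CAD and the segment tests from $g$ and $sk$ alone.

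This leaves out the locus where your parametric Gr\"obner basis and Hermite matrices over $\mathbb{Q}(s,k,d)$ fail to specialize. That locus is the zero set of the leading coefficients and denominators, i.e.\ of $q$ in Lemma~\ref{h}. Across it a real zero can escape to infinity without any double root at finite distance, so nothing forces the numerator of $\det Q_1$ to vanish there.

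Here this actually happens. On $d+2k-1=0$ the simplex with the $k$-body at its centre is an equilibrium. Take $r_2\to0$ and $|r_1|\to\infty$, so that $r_3\approx r_1^{-2}$ and $r_4\to\frac{2d}{d-1}$. The second equation of (\ref{polygsys}) then forces $r_2=O(r_1^{-3})$, and the first reduces to
\[
\frac{d(d-1+2k)}{d-1}\,r_1^{2}-(d+k)+O(r_1^{-2})=0 .
\]
So there is a real zero with $r_1>r_2$ and $r_1\to+\infty$ when $d-1+2k$ is slightly positive, and it has no real counterpart when $d-1+2k$ is slightly negative. The number of zeros with $r_1>r_2$ therefore jumps by one across this plane.

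Since $d+2k-1$ divides $q$ but not $g$, a single component of $\{g\neq0,\ sk\neq0,\ d>1\}$ can straddle this plane. One sample point per such component would misreport part of it, and your segment test could merge points with different counts. Your samples could also land on rational planes such as $s+k=0$, where your parametric Hermite matrices have poles.

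The paper avoids all of this by running the CAD on $\{gq>0,d>1\}$ and $\{gq<0,d>1\}$, and by testing segments against $gw$ together with the signs of the linear factors of $q$. Accordingly, ``generic'' must mean off $\{gq=0\}$, not just off $\{g=0\}$.

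A secondary point is your integrality remark, whose premise is false. The paper finds $15$ sample points, all with $1<d<2$, whose systems have four zeros with $r_1>r_2>0$ or four with $0>r_1>r_2$. The components containing them lie entirely in $1<d<2$, so not every component meets an integer $d$.

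Sampling over real $d>1$ already covers every integer $d$. What you must do is discard counts coming from components that contain no integer $d$. Checking the $d$-projection of a component you know only through sample points is not straightforward. The paper does it cheaply and rigorously by comparing with the separate $d=2$ classification (Proposition~\ref{d2}): those counts are impossible at $d=2$, so these components miss the plane $d=2$ and hence lie in $d<2$.
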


\begin{corollary}\label{main}
For $a=-1$, the numbers of $(2,d)$-cc's are finite with an upper bound $12$. Generically, there are $0,1,2,3,4$ concave, $1,2,3,4$ convex, and in totality $1,2,3,4,5$ such central configurations. For positive masses, there is a unique convex $(2,d)$-cc and there are $0,2,4$ concave $(2,d)$-cc's, generically. No new information on the numbers of $(2,d)$-cc's can be found besides what is known for $d=2$. In Table $1$, a list of parameters that realize all possible numbers of $(2,d)$-cc's is presented.
\renewcommand{\arraystretch}{2}
\begin{table}[h]
\begin{center}
\begin{tabular}{|c|c|c|c|c|}
\hline
$d$ & $(s,k)$& concave & convex & total  \\ \hline 
$2$ & $(9,\frac{3}{2})$&$0$&$1$& $1$ \\ \hline
$2$ & $(-\frac{15}{8},-\frac{5}{8})$&$0$& $2$&$2$ \\ \hline
$2$ & $(-\frac{15}{18},-\frac{1}{4})$&$0$& $3$&$3$ \\ \hline
$2$ & $(-\frac{15}{8},-\frac{15}{8})$&$0$& $4$&$4$ \\ \hline
$2$ & $(-\frac{67}{128},-18)$&$1$& $1$&$2$ \\ \hline
$2$ & $(-\frac{1}{4},-5)$&$1$& $2$&$3$ \\ \hline
$2$ & $(-\frac{15}{8},-3)$&$1$& $3$&$4$ \\ \hline
$2$ & $(\frac{1}{4},2)$&$2$& $1$&$3$ \\ \hline
$2$ & $(\frac{1}{4},-\frac{97}{384})$&$2$& $2$&$4$ \\ \hline
$2$ & $(-\frac{11}{5},\frac{71}{32})$&$3$& $1$&$4$ \\ \hline
$2$ & $(-\frac{1}{4},1)$&$3$& $2$&$5$ \\ \hline
$2$ & $(\frac{1}{4},1)$&$4$& $1$&$5$ \\ \hline
\end{tabular}
\caption{All different numbers of $(2,d)$-cc's and $12$ examples}
\end{center}
\end{table}\renewcommand{\arraystretch}{2}
\end{corollary}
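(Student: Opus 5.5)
The corollary is a translation of Theorem \ref{maint} into the language of configurations, so I will argue by the correspondence already set up before the statement. For $a=-1$ the central configuration equations for $(2,d)$-cc's are equivalent to system (\ref{algsys}). The substitution $r_1=z$, $r_2=w$, $r_3=(\frac{d-1}{2d}+z^2)^{-1}$, $r_4=(\frac{d-1}{2d}+w^2)^{-1}$ turns (\ref{algsys}) into (\ref{polygsys}). The first step is to check that this substitution is a bijection between real solutions $(z,w)$ of (\ref{algsys}) and real zeros of (\ref{polygsys}). The last two equations of (\ref{polygsys}) read $r_3\bigl(\frac{d-1}{2d}+r_1^2\bigr)=1$ and $r_4\bigl(\frac{d-1}{2d}+r_2^2\bigr)=1$. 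So $r_3,r_4$ are uniquely determined by $r_1,r_2$, and they are never undefined, because $\frac{d-1}{2d}>0$ for $d>1$. Clearing the denominators $(z-w)$ and the powers from the first two equations of (\ref{algsys}) is reversible on the set $r_1\neq r_2$, $r_1,r_2\neq0$. The paper has already noted that no real zeros exist outside this set.

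Next I will use the symmetry $x_1\leftrightarrow x_2$ (equivalently $z\leftrightarrow w$ together with $s\leftrightarrow k$) and the normalization $z>w$. With these, each $(2,d)$-cc up to the allowed symmetries corresponds to exactly one real zero with $r_1>r_2$. Finiteness and the bound $12$ then follow from the first assertion of Theorem \ref{maint}. That assertion bounds the complex zeros by $12$, so in particular it bounds the real zeros with $r_1>r_2$.

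For the classification, I will use the geometric dictionary from the discussion preceding the theorem. If $r_1>r_2>0$ or $0>r_1>r_2$, one of the two axial bodies lies in the convex hull of the others, so the configuration is concave. If $r_1>0>r_2$, the two axial bodies lie on opposite sides of the simplex hyperplane. The segment joining them then meets that hyperplane at the centroid, which is interior to the simplex, so no body is in the hull of the others and the configuration is convex. Theorem \ref{maint} gives the generic counts $0$–$4$ for concave configurations, $1$–$4$ for convex ones, and $1$–$5$ in total. For $s,k>0$ it gives a unique convex configuration and $0,2,4$ concave ones. The statement that no new information appears beyond $d=2$ is the final clause of Theorem \ref{maint}: every possibility is already realized at $d=2$.

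The last step is to verify Table 1. For each listed $(2,s,k)$ I substitute into (\ref{polygsys}) and compute a Gröbner basis. I then count the real zeros in the three sign regions with Hermite quadratic forms, using the multipliers $r_1-r_2$, $r_1$ and $r_2$, or equivalently by isolating the real roots of the univariate eliminant in $r_2$. Each listed parameter point must also be off the bifurcation surface $g=0$, so that the count is the generic one. I expect this computational verification of the twelve entries to be the main obstacle. The bijection argument is routine, but for some parameter choices the counts in different regions differ only through roots that are very close together. Certifying these requires exact rational arithmetic with signature computations rather than floating-point root finding.
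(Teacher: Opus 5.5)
Your route is the paper's own: it says Corollary~\ref{main} follows directly from Theorem~\ref{maint}, with concave/convex read off from the sign regions of $(r_1,r_2)$. Table~1 is taken from the $d=2$ sample points of Table~\ref{d2table}, whose counts the paper certified by Hermite signatures with the multipliers $v_1,\dots,v_8$.

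\textbf{The step that does not work as written is the normalization $z>w$.} The relabeling $x_1\leftrightarrow x_2$ (i.e.\ $r_1\leftrightarrow r_2$, $r_3\leftrightarrow r_4$, $s\leftrightarrow k$) sends zeros of (\ref{polygsys}) at $(s,k)$ to zeros at $(k,s)$. So for fixed masses with $s\neq k$ it cannot turn a configuration with $z<w$ into one with $z>w$.

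The symmetry you need is the reflection through the hyperplane of the simplex, $(r_1,r_2,r_3,r_4)\mapsto(-r_1,-r_2,r_3,r_4)$. It fixes $(s,k,d)$. It leaves (\ref{polygsys}) invariant, because every monomial of $f_1,f_2$ has even total degree in $(r_1,r_2)$ and $f_3,f_4$ involve only $r_1^2,r_2^2$. Since $r_1\neq r_2$ at every real zero, it maps zeros with $r_1<r_2$ bijectively onto zeros with $r_1>r_2$. It sends $\{r_1<0<r_2\}$ to $\{r_1>0>r_2\}$, $\{r_1<r_2<0\}$ to $\{r_1>r_2>0\}$, and $\{0<r_1<r_2\}$ to $\{0>r_1>r_2\}$, so convexity type is preserved. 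With this replacement your counting argument is complete.

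Two smaller points concern the verification of Table~1.
\begin{enumerate}
\item A point certifies a \emph{generic} count only if it lies in $\mathcal{P}\setminus\{g=0\}$. So you must check $q\neq 0$ as well as $g\neq 0$.
\item The printed $(0,3)$ entry $(-\frac{15}{18},-\frac14)$ of Table~1 does not match the representative $(-\frac{15}{8},-\frac14)$ of Table~\ref{d2table}, from which the paper takes it. At $d=2$, $k=-\frac14$, the factor $d^3+d^2k+2d^2s-2dks-4ks^2$ of $q$ reduces to $s^2+9s+7$. This is positive at $s=-\frac56$ and negative at $s=-\frac{15}{8}$, so the two points lie in different components of $\mathcal{P}$. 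The paper's computation certifies only the latter point; your recomputation at the printed point is what would actually settle that entry.
\end{enumerate}
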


\subsection{Obtaining the bifurcation surface in $\mathbb{R}^{3}$}
In this subsection, we focus on obtaining the \emph{bifurcation surface} in $\mathbb{R}^{3}$ containing parameters where the numbers of real zeros may change. The main tools are Gr\"{o}bner bases \cite{CLO} and Hermite quadratic forms \cite{BPR}.

Let $f_{1},f_{2},f_{3},f_{4}$ denote the polynomials in (\ref{polygsys}). We begin with finding a \emph{generic Gr\"{o}bner basis} for our polynomial system $\mathcal{F}:=\{f_{1},f_{2},f_{3},f_{4}\}$ in $\mathbb{Z}\left[s,k,d\right]\left[ r_{1},r_{2},r_{3},r_{4} \right]$. In other words, if $\mathcal{F}^{s,k,d}$ denotes the real polynomial system after specializing $\mathcal{F}$ at $(s,k,d)\in \mathbb{R}^{3}$, then we will find a set of polynomial $\mathcal{G} \subset \mathbb{Z}\left[s,k,d\right]\left[ r_{1},r_{2},r_{3},r_{4} \right]$ such that $\mathcal{G}^{s,k,d}$ is a Gr\"{o}bner basis for $\mathcal{F}^{s,k,d}$ for all points $(s,k,d)\in \mathbb{R}^{3}$, except for those in the zero set of a polynomial. 

\begin{lemma}\label{h}
The system $\mathcal{F}=\{f_{1},f_{2},f_{3},f_{4}\}$ has a generic Gr\"{o}bner basis $\mathcal{G}=\{g_{1},\dots,g_{11}\}$ satisfying that $\mathcal{G}^{s,k,d}$ is a Gr\"{o}bner basis for $\mathcal{F}^{s,k,d}$ for all $(s,k,d)$ with $q\neq 0$, where 
\[
\begin{aligned}
q:=&(d-1) d k (d + k) (d + 2 k-1) s (d + s)(d + 
   2 s-1)(k + s)\times \\
    &(d^2 - 4 k s) (d k + d s + 2 k s)(d^3 + d^2 k + 2 d^2 s - 
   2 d k s - 4 k s^2).
\end{aligned}
\]
\end{lemma}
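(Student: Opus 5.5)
The plan is to build $\mathcal{G}$ by a Buchberger computation over the field of rational functions and then show that this computation stays valid under specialization. First we fix a monomial order on $r_1,r_2,r_3,r_4$; a graded reverse lexicographic order is the natural choice, since we only need a basis, not an elimination. We then run Buchberger's algorithm on $\mathcal{F}$, viewed as a subset of $\mathbb{Q}(s,k,d)[r_1,r_2,r_3,r_4]$. Each polynomial we produce is kept in fraction-free form, with coefficients in $\mathbb{Z}[s,k,d]$ after clearing denominators. Alongside this we record every coefficient by which we divide. These are the leading coefficients of the divisors used in the reduction steps, together with the leading coefficients of the basis elements. The output is a reduced basis $\mathcal{G}=\{g_1,\dots,g_{11}\}$ over $\mathbb{Q}(s,k,d)$.

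The key tool is the standard specialization criterion for comprehensive or generic Gröbner bases, as in the approach of \cite{T} and \cite{CLO}. Suppose every S-polynomial reduction in the run uses only divisions by polynomials whose leading coefficients are nonzero at $(s,k,d)$, and every $g_i$ keeps its leading coefficient nonzero there. Then three things survive specialization. The leading monomials are preserved. Each $g_i^{s,k,d}$ lies in the ideal generated by $\mathcal{F}^{s,k,d}$, because the cofactor representations have denominators that are products of the recorded coefficients. Finally, every S-polynomial of $\mathcal{G}^{s,k,d}$ reduces to zero by the specialized reduction chain. Buchberger's criterion then shows that $\mathcal{G}^{s,k,d}$ is a Gröbner basis of $\mathcal{F}^{s,k,d}$.

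It therefore suffices to collect all recorded leading coefficients and cofactor denominators and factor them over $\mathbb{Z}$. We then check that every irreducible factor that appears is one of the factors of $q$: $d-1$, $d$, $k$, $d+k$, $d+2k-1$, $s$, $d+s$, $d+2s-1$, $k+s$, $d^2-4ks$, $dk+ds+2ks$, and $d^3+d^2k+2d^2s-2dks-4ks^2$. Whenever $q\neq 0$, none of these factors vanishes, so the criterion applies and the lemma follows. The input polynomials are compatible with this; for instance, $f_3$ has leading coefficient $2d$ in $r_1^2r_3$ and $f_1$ has leading coefficient $d$ or $k$, depending on the order.

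The main obstacle is bookkeeping rather than theory. Intermediate coefficient swell can introduce spurious factors that do not appear in $q$. To handle this, we will first try to compute the reduced basis and then certify it directly. For the membership direction, we write each $g_i$ explicitly as $\sum c_{ij} f_j$ with $c_{ij}$ having denominators dividing a power of $q$. For the Gröbner property, we exhibit the reductions of all S-pairs of $\mathcal{G}$ to zero, again with denominators dividing powers of $q$. This certificate avoids depending on the particular run of the algorithm. If a spurious factor still appears, we will try a different ordering of the variables, since the stated $q$ presumably comes from one specific choice.
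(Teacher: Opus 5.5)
This is correct and is essentially the paper's argument: the paper computes the reduced graded-reverse-lexicographic basis over $\mathbb{Q}(s,k,d)$ and clears denominators. It then takes $q$ to be the square-free part of the least common multiple of the leading coefficients of $\mathcal{G}\cup\mathcal{F}$, and certifies the membership direction by checking $(lcm)\,g_i\in\langle f_1,\dots,f_4\rangle\subset\mathbb{Q}[r_1,r_2,r_3,r_4,s,k,d]$ with a Gr\"obner basis in all seven variables. That is exactly your certificate fallback, and your separate S-pair check is automatic, since reductions over $\mathbb{Q}(s,k,d)$ only ever divide by leading coefficients of $\mathcal{G}$.

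The one step you leave implicit is the reverse inclusion $f_j^{s,k,d}\in\langle\mathcal{G}^{s,k,d}\rangle$. You need it so that $\mathcal{G}^{s,k,d}$ generates all of $\langle\mathcal{F}^{s,k,d}\rangle$ and not merely a subideal. It follows the same way: each $f_j$ reduces to $0$ modulo $\mathcal{G}$ while dividing only by leading coefficients of $\mathcal{G}$. The specialized identities are then standard (lcm) representations, which is what Buchberger's criterion needs, even where intermediate leading coefficients vanish and the specialized chain is no longer a literal reduction chain.
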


\begin{proof}
We first consider $\mathcal{F}$ as a system in $\mathbb{Q}(s,k,d)\left[ r_{1},r_{2},r_{3},r_{4} \right]$, where $\mathbb{Q}(s,k,d)$ is the field of rational functions in $s,k,d$. Applying the \emph{GroebnerBasis} command in Mathematica with the graded reverse lexicographic order on the variables $r_{1},r_{2},r_{3},r_{4}$, we obtain the semi-reduced Gr\"{o}bner basis $\mathcal{G}=\{g_{1},\dots,g_{11}\}$ obtained from clearing denominators for the reduced Gr\"{o}bner basis in $\mathbb{Q}(s,k,d)$ \cite{link1}. Let $lcm$ be the least common multiple of all leading coefficients in $\mathbb{Z}\left[ s,k,d \right]$ of all polynomials in $\mathcal{G} \cup \mathcal{F}$.

Next, we compute a Gr\"{o}bner basis for $\{f_{1},f_{2},f_{3},f_{4}\}$ viewing  
$r_{1},r_{2},r_{3},r_{4}$ and $s,k,d$ as variables with the graded reverse lexicographic order to show $(lcm)g_{i}$ is in the ideal generated by $\{f_{1},f_{2},f_{3},f_{4}\}$ in $\mathbb{Q}\left[ r_{1},r_{2},r_{3},r_{4},s,k,d \right]$ for all $i=1,\dots,11$. By doing so, for all $(s,k,d)$ with $lcm \neq 0$, we are guaranteed that $\mathcal{G}^{s,k,d}$ will be a Gr\"{o}bner basis for $\mathcal{F}^{s,k,d}$. The procedure of obtaining such a generic Gr\"{o}bner basis follows from exercise $9$ of Chapter $6.3$ in \cite{CLO}. Here $q$ is the square-free part of $lcm$.
\end{proof}

Now, we are ready to give definitions for some terms used in this paper.
\begin{definition}
\begin{enumerate}
  \item The set $\mathcal{P}=(\mathbb{R}^{2}\times\{d>1\})\setminus \{q=0\}$, where $q$ is given in Lemma \ref{h}, is the \emph{parameter space} for our generic results in this paper.
  \item The \emph{bifurcation set} $\mathcal{B}$ is defined as the projection of the common zeros in $\mathbb{R}^{7}$ of $\{f_{1},f_{2},f_{3},f_{4},J\} \subset \mathbb{Q}\left[ r_{1},r_{2},r_{3},r_{4},s,k,d \right]$, where $J$ is the Jacobian determinate of $f_{i}$'s with respect to $r_{j}$'s, onto the parameter space $\mathcal{P}$.
  \item A polynomial in $\mathbb{Z}\left[ s,k,d \right]$ is called a \emph{bifurcation polynomial} if its zero set, called the \emph{bifurcation surface}, in $\mathbb{R}^{3}$ contains $\mathcal{B}$. 
\end{enumerate}
\end{definition}

\begin{lemma}\label{g}
A bifurcation polynomial $g \in \mathbb{Z}\left[ s,k,d \right]$ that has degrees $13,13,21$ in $s,k,d$, respectively, and $1607$ terms can be computed from $\mathcal{F}$ without using its Jacobian determinate $J$. 
\end{lemma}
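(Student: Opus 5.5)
The plan is to use the Hermite quadratic form built from the generic Gr\"obner basis $\mathcal{G}$ of Lemma \ref{h}. This is the method of \cite{T}.

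First, for $(s,k,d)\in\mathcal{P}$ the specialization $\mathcal{G}^{s,k,d}$ is a Gr\"obner basis of $\mathcal{F}^{s,k,d}$. Its leading monomials (in graded reverse lexicographic order) are independent of the parameters. So the standard monomials, those not divisible by any leading monomial of $\mathcal{G}$, form a fixed set $\mathcal{S}=\{b_1,\dots,b_N\}$. I expect $N=12$, which is also the bound of $12$ complex zeros in Theorem \ref{maint}. The quotient $A=\mathbb{Q}(s,k,d)[r_1,\dots,r_4]/\langle\mathcal{F}\rangle$ is then an $N$-dimensional vector space with basis $\mathcal{S}$.

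Next, compute the multiplication matrices $M_{r_j}$ by reducing $r_jb_i$ modulo $\mathcal{G}$; their entries lie in $\mathbb{Q}(s,k,d)$. Form the Hermite matrix $H=(\operatorname{Tr}(M_{b_ib_j}))_{i,j}$, a $12\times12$ symmetric matrix over $\mathbb{Q}(s,k,d)$.

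Two facts about $H$ are needed. By Hermite's theorem, at each specialized parameter the signature of $H$ counts the distinct real zeros and its rank counts the distinct complex zeros. Moreover, $\det H$ is, up to a nonzero factor, the discriminant of the zero set: it vanishes exactly when $\mathcal{F}^{s,k,d}$ has a multiple zero, that is, when the number of distinct complex zeros drops below $12$.

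At a point of $\mathcal{B}$ some zero has $J=0$, so that zero is a multiple zero of the zero-dimensional ideal. The specialized $H$ is then singular, since $\mathcal{G}^{s,k,d}$ is a Gr\"obner basis there and specialization commutes with forming $H$. Hence $\mathcal{B}$ is contained in the zero set of the numerator of $\det H$. Take $g$ to be the square-free part of this numerator with integer coefficients, and possibly remove factors that divide $q$, since those vanish only off $\mathcal{P}$.

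Note that this computation uses only $\mathcal{F}$, through $\mathcal{G}$, and never uses $J$, as the lemma asserts. It then remains to read off from the computation the degrees $13,13,21$ in $s,k,d$ and the count of $1607$ terms.

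The main obstacle is computational: evaluating a $12\times12$ determinant with rational-function entries in three parameters causes severe expression swell. To handle it, I would first try fraction-free (Bareiss) elimination after clearing the common denominators of each row. If that is too slow, I would compute $\det H$ at many integer parameter values and reconstruct it by interpolation or rational reconstruction. Bounds on the degrees, obtained from degrees along univariate slices, would tell me how many evaluation points are needed. Before factoring, I would cancel denominator factors, which I expect to divide powers of $q$.

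The theoretical step that needs care is justifying that singularity of the specialized $H$ really captures every point of $\mathcal{B}$ inside $\mathcal{P}$. This rests on $\mathcal{G}^{s,k,d}$ staying a Gr\"obner basis with the same standard monomials there, which is exactly what Lemma \ref{h} guarantees.
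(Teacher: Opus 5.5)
Your proposal is correct and follows essentially the same route as the paper. You build the $12\times 12$ Hermite matrix over $\mathbb{Q}(s,k,d)$ from the generic Gr\"obner basis $\mathcal{G}$, note that at points of $\mathcal{B}$ a real zero with $J=0$ is a multiple zero so the specialized matrix is singular, and take the numerator of its determinant with the factors of the lcm (equivalently of $q$) removed, reading off the degrees and term count from the computation; your remarks on why specialization commutes with forming the matrix on $\mathcal{P}$, and on computing the determinant, are reasonable elaborations the paper leaves implicit.
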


\begin{proof}
The set $\mathcal{G}$ computed in the proof of Lemma \ref{h} is a Gr\"{o}bner basis of $\mathcal{F}$ in $\mathbb{Q}(s,k,d)\left[ r_{1},r_{2},r_{3},r_{4} \right]$. Therefore, we can use it to compute a symmetric matrix $\mathcal{H}$ of the Hermite quadratic form in $\mathbb{Q}(s,k,d)$ whose dimension is the number of zeros in the algebraic closure of $\mathbb{Q}(s,k,d)$ counted with multiplicities, and whose rank is the number of distinct zeros in the same field \cite{BPR}. For real triples $(s,k,d)$ in $\mathcal{P}$, the rank of $\mathcal{H}^{s,k,d}$ gives the number of distinct complex zeros of $\mathcal{F}^{s,k,d}$. If $(s,k,d)$ is in $\mathcal{B}$, then there is a real zero of $\mathcal{F}^{s,k,d}$ such that the multiplicity is greater than one. Therefore, $\mathcal{H}^{s,k,d}$ cannot have full rank. (Full rank for $\mathcal{H}$ means all the multiplicities are one.) So, the determinant of $\mathcal{H}^{s,k,d}$ must be zero. After factoring the determinant of $\mathcal{H}$ in $\mathbb{Q}(s,k,d)$ and removing factors in $lcm$ obtained in the proof of Lemma \ref{h}, we obtain our bifurcation polynomial $g$.
\end{proof}

Figures $3$ and $4$ show the curves determined by $g(s,k,d)=0$ for $d=2$ and $d=3$, respectively, in the first quadrant of the $sk$-plane. Note the differences between our figures for $a=-1$ and the figures for $a=-\frac{3}{2}$ in \cite{L}. There are four open connected components determined by the complement of the bifurcation curves in \cite{L}, while there are nine components in our cases. Also, there is only one singular point for $a=-\frac{3}{2}$ on the line $s=k$, while, in our cases, there are four, two on the line $s=k$ and two not on it. 

\begin{figure}[h!]
\centering
\begin{minipage}{.52\textwidth}
  \centering
  \includegraphics[width=.72\linewidth]{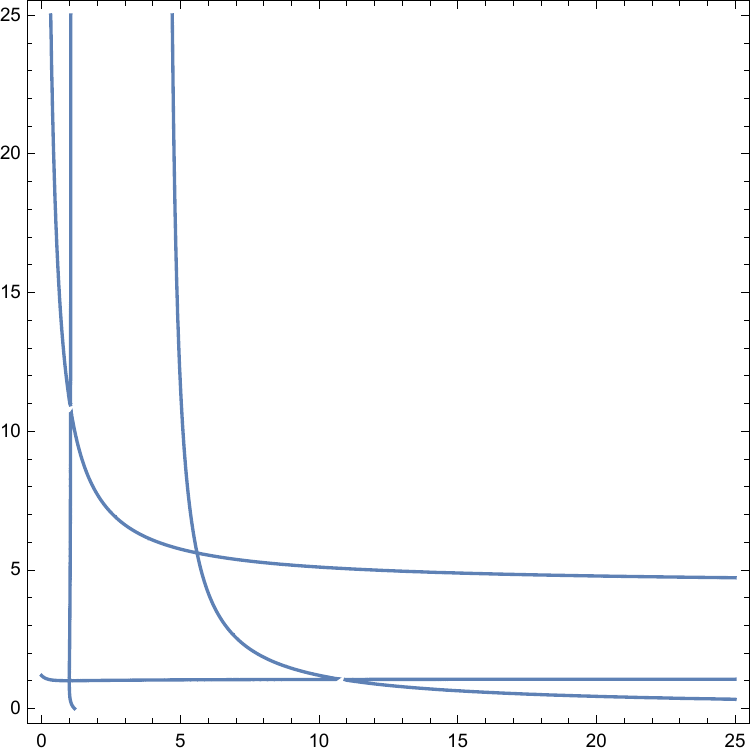}
  \caption{}{The curve of g(s,k,d)=0 for d=2 in the first quadrant of the sk-plane.}
\end{minipage}%
\begin{minipage}{.52\textwidth}
  \centering
  \includegraphics[width=.72\linewidth]{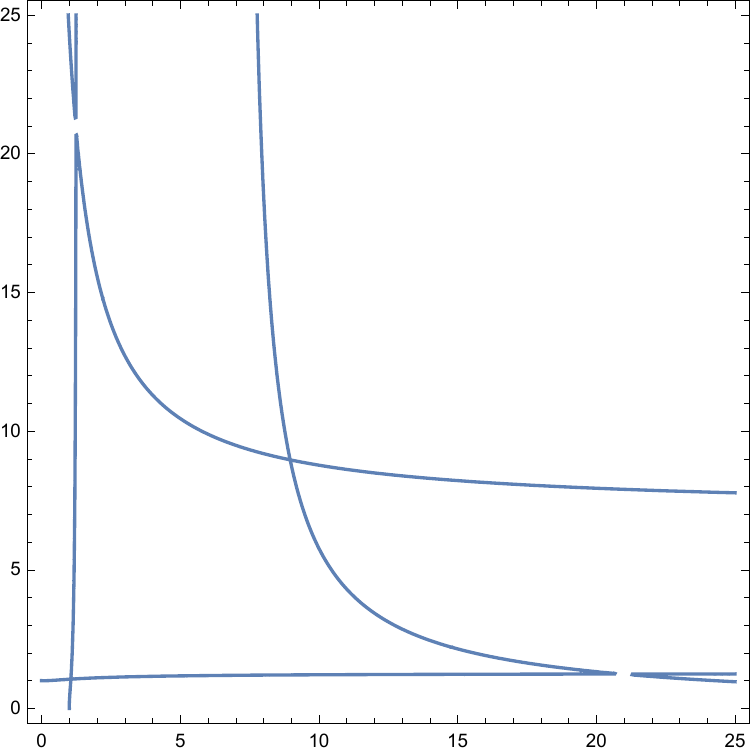}
  \caption{}{The curve of g(s,k,d)=0 for d=3 in the first quadrant of the sk-plane.}
\end{minipage}
\end{figure}

Note the parameter space $\mathcal{P}$ is partitioned by the zero set of $q$ into finite union of open connected components. Some of these components are further partitioned by the zero set of $g$. In each component, the number of real zeros is a constant. Also, it is easy to see that for all $(s,k,d)\in \mathcal{P}$, there is no zeros for $r_{1}=0$ or $r_{2}=0$ or $r_{1}=r_{2}$. Therefore, the numbers of concave configurations which correspond to zeros with $r_{1}>r_{2}>0$ or $0>r_{1}>r_{2}$ and convex configurations which correspond to zeros with $r_{1}>0>r_{2}$ are constants in each of the open connected components.

\subsection{Open connected components in the parameter space with $g\neq0$}\label{openc}
Here, our goal is to pick at least one sample points from each of the open connected components in the parameter space where $g\neq0$. We have the following first estimation on the upper bound of the number of such components. 

\begin{proposition}\label{197323}
The number of open connected components in $\mathcal{P} \setminus \{g=0\}$ is at most $197232$. 
\end{proposition}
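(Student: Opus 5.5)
The bound will come from running Collins' cylindrical algebraic decomposition (CAD) on the zero sets of $g$, $q$ and $d-1$, and counting its full-dimensional cells. Concretely, I would compute a CAD of $\mathbb{R}^3$ adapted to the polynomial set $\{g, q, d-1\}$. Here $q$ is the polynomial of Lemma \ref{h}, $g$ is the bifurcation polynomial of Lemma \ref{g}, and the variables are projected in the order $s$, then $k$, then $d$. I would exploit the factorization of $q$ and use its irreducible factors in place of $q$.

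The projection phase eliminates $s$ and then $k$. Each step collects the leading coefficients, discriminants and pairwise resultants, producing first a set of polynomials in $(k,d)$ and then a set of polynomials in $d$. The lifting phase then proceeds in stages:
\begin{enumerate}
\item Isolate the real roots of the univariate projection polynomials by exact integer arithmetic.
\item Choose a rational sample point strictly between consecutive roots with $d>1$.
\item Over each such sample, isolate the roots in $k$ and choose rational points between them.
\item Finally, lift in $s$ the same way.
\end{enumerate}

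Every open cell of the resulting decomposition is a cylindrical cell of dimension three. Because the decomposition is sign-invariant for all the input polynomials, each such cell lies in exactly one connected component of $\mathcal{P}\setminus\{g=0\}$. Conversely, the union of the open cells is dense in the complement of $\{gq(d-1)=0\}$, so every open connected component of $\mathcal{P}\setminus\{g=0\}$ contains at least one open cell. The number of components is therefore at most the number of open cells, which equals the number of sample points the algorithm produces with all three coordinates taken strictly between roots. The statement then follows by reporting that this count is $197232$, as produced by the Mathematica implementation of CAD (for example \texttt{GenericCylindricalDecomposition} or \texttt{CylindricalDecomposition} applied to $g\,q\neq 0 \wedge d>1$) in the supplementary notebook.

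The main obstacle is computational. $g$ has degrees $13,13,21$ and $1607$ terms, so the discriminant of $g$ in $s$ and the resultants of $g$ with the factors of $q$ become enormous after the second projection, and root isolation over rational samples is costly. To keep this manageable, I would use McCallum's reduced projection operator, rather than Collins' full one, on the square-free irreducible factors. I would also pick the variable order that keeps the projected degrees smallest, likely eliminating the symmetric-looking pair $s,k$ before $d$. The count is only an upper bound: CAD cells are usually much finer than connected components, which is why $197232$ is later reduced to $202$.
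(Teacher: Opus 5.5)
Your argument is essentially the paper's: the paper also rests on a CAD-based computation, namely Mathematica's \emph{SemialgebraicComponentInstances} applied separately to $\{gq>0,\ d>1\}$ and $\{gq<0,\ d>1\}$. That computation returns at least one rational sample point per open connected component ($100771+96461=197232$ points), and your sign-invariance and density argument makes explicit why such a sample count bounds the number of components. One caveat: $197232$ is the output of that particular computation, so if you build your own CAD (McCallum projection, your chosen variable order, or the open cells of a full \emph{CylindricalDecomposition}), you must report the count it actually yields rather than assume it equals $197232$.
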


\begin{proof}
Recall the parameter space is $\mathcal{P}=(\mathbb{R}^{2}\times\{d>1\})\setminus \{q=0\}$. The subset in $\mathbb{R}^{3}$ with $g\neq0, q\neq0, d>1$ is a semialgebraic set which is a finite union of open connected components \cite{BPR}. (As mentioned in the previous subsection, the number of real zeros is a constant in each component.) To find at least a sample point from each component, we use the tool of cylindrical algebraic decomposition(CAD) introduced by Collins \cite{C}. In Mathematica, the command \emph{SemialgebraicComponentInstances} is based on the CAD algorithm and gives at least a sample point from each component of a semialgebraic set \cite{link2}.  

In our case, the computation is non-trivial. We apply \emph{SemialgebraicComponentInstances} for $\{ gq>0, d>1\}$ and $\{ gq<0, d>1\}$ and obtain $100771$ and $96461$ rational sample points after about $12$ hours of the computation. At least one point from each of the open connected components of $\mathcal{P}\setminus \{g=0\}$ is picked. There are totally $197232$ of them, which gives an upper bound for the number of open connected components in $\mathcal{P} \setminus \{g=0\}$.
\end{proof}

The number of sample points obtained from \emph{SemialgebraicComponentInstances} is far from minimal. The number of it gives only a rough upper bound for the number of open connected components of a semialgebraic set. To reduce the number of sample points, we can connect those in the same component by continuous arcs and remove some belonging to the same component. One choices of the arcs is a union of finite line segments. Using more than one line segments can be a good approach when working in $\mathbb{R}^{2}$ where one may use numerical plots as hints about where the intermediate points should be. Considering we are working in $\mathbb{R}^{3}$ and that the number in our case is $197232$, we only use a single line segment.

Given a list of sample points of the semialgebraic set $f\neq 0$, we connect points as follows. We randomly choose a point $p_{1}$ from the list and then find all its neighbours defined as all other sample points connecting to it with a single line segment. This can be achieved by applying the \emph{CountRoots} command in  Mathematica \cite{CK}. It counts the number of real zeros rigorously in any closed interval for univariate polynomials. So, we can parametrize the line segment in the $(s,k,d)$-space with $t\in \left[ 0,1\right]$, where $t=0,1$ give two sample points, and composite with $f$ to see if there is a real zero for the composition in $(0,1)$. If not, two points are connected by the line segment. 

We have $197232$ sample points to classify in open components, In order to do this task  we create a procedure named  \emph{ConnectedCom}, described in the following steps. We choose a simple point $p_1$ and we collect all neighbors of $p_1$. After this step, we continue finding neighbours of the neighbours and stop until no more points can be included in the family of $p_{1}$. We repeat the process for the rest of the sample points. Eventually, we partition the list into a union of families of points, where all points in the same family belong to the same component. Choosing one representative point from each family, we obtain a potentially shorter list of sample points. 

Using the procedure \emph{ConnectedCom} and some tricks on the set of $197232$ sample points, we are able to have a much better estimations on the numbers of open connected components.

\begin{proposition}\label{217}
The number of open connected components in $\mathcal{P} \setminus \{g=0\}$ is at least $78$ and at most $217$. The number of open connected components in $(\{s>0\}\times\{k>0\}\times\{d>1\})\setminus \{gq=0\}$ is at least $6$ and at most $43$.
\end{proposition}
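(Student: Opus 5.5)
The upper bound comes from running \emph{ConnectedCom} on the $197232$ CAD sample points of Proposition \ref{197323}, with some preprocessing so the computation is feasible. The lower bound comes from exhibiting sample points that provably lie in different components. Two points must lie in different components if the root count of $\mathcal{F}^{s,k,d}$ differs between them. It is also enough that some sign-invariant quantity differs, such as the sign of $q$, the sign of $g$, or the sign of $s$ and $k$.

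For the upper bound I will not run a single depth-first search over all points, since checking every pair with \emph{CountRoots} is about $10^{10}$ univariate root counts. Instead I will divide and conquer. First I split the points by the signs of $gq$, $s$, $k$ and of each linear factor of $q$, such as $d+s$, $d+2s-1$ and $k+s$. Points in different classes cannot be joined by an open path in $\mathcal{P}\setminus\{g=0\}$, so no pairs across classes need testing. Inside each class I sort the points by the CAD cell structure: first by the $d$-coordinate, then by $s$, then by $k$. I run \emph{ConnectedCom} first among points with the same $d$ sample, where segments are planar and most succeed. I then merge families across neighbouring $d$ slices by testing segments between their representatives only.

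A segment from $p$ to $p'$ certifies connection if the composition $(gq)(p+t(p'-p))$, a univariate polynomial in $t$, has no real root in $[0,1]$, as counted exactly by \emph{CountRoots}. I also check that $d>1$ along the segment, which holds automatically by convexity. After iterating until no families merge, the number of surviving families is the upper bound $217$. Restricting to points with $s,k>0$, which form a union of whole classes, gives the bound $43$.

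For the lower bound, points in distinct classes of the sign partition above are automatically in distinct components. I will count the occupied classes and refine them where possible using the root counts. For this I use the generic Gröbner basis of Lemma \ref{h} and the Hermite quadratic form from Lemma \ref{g} to compute, at each representative, the numbers of real zeros with $r_1>r_2>0$, with $0>r_1>r_2$, and with $r_1>0>r_2$. Two representatives with the same sign vector but different triples of counts lie in different components. Counting the distinct pairs (sign vector, count triple) gives at least $78$ components. Among those with $s,k>0$ it should give at least $6$, consistent with the six distinct (concave, convex) patterns and signs of $g$ visible in Figures 3 and 4.

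The main obstacle is computational feasibility in the upper-bound step. Whether the single-segment test collapses the families far enough depends heavily on how the points are ordered and grouped, since thin or curved components defeat straight segments. If the first pass leaves too many families, I would try intermediate points, for example midpoints perturbed inside CAD cells, to build two-segment paths between stubborn representatives.
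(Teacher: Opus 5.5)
Your upper-bound plan is sound and is essentially the paper's. You split the sample points into sign classes that no component can straddle, certify single segments by exact root counting on $[0,1]$, and merge families in stages. The paper stages within each class by equal $s$, then equal $k$, then equal $d$, then a final global pass. It only tests $gw$ with $w=(d^2-4ks)(dk+ds+2ks)(d^3+d^2k+2d^2s-2dks-4ks^2)$, because linear factors cannot change sign on a segment joining points of the same class. Note that $217$ and $43$ are the outputs of that particular pipeline, and your variant is not guaranteed to reproduce them.

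The genuine gap is in the lower bound. Your sign partition uses only $gq$ and the linear factors of $q$. You then assert that refining by root-count triples gives at least $78$, and at least $6$ for $s,k>0$, but nothing supports these numbers. Two classes that differ only in the sign of a nonlinear factor of $q$ and in the sign of $g$ have the same sign of $gq$, and they may share a count triple. So your invariant can take fewer than $78$ values. For $s,k>0$ it is worse. Every linear factor of $q$ is then positive, so your sign vector collapses to the sign of $gq$ alone, two classes. The remaining factor of three is only a guess about how count triples are distributed. Figures 3 and 4 are numerical slices at $d=2,3$ and cannot certify how many components the three-dimensional region has.

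The missing idea is that the nonlinear factors $d^2-4ks$, $dk+ds+2ks$ and $d^3+d^2k+2d^2s-2dks-4ks^2$ are also nonzero on $\mathcal{P}$. Their signs are therefore constant on every component, just like those of the linear factors. The paper's lower bound is simply the number of realized sign vectors of all ten nontrivial factors of $q$, together with the sign of $g$, among the $197232$ sample points. Exactly $47$ sign patterns of $q$ occur, $31$ of them with both signs of $g$, giving $16+2\cdot 31=78$. For $s,k>0$, the linear factors and $dk+ds+2ks$ are positive. Moreover $d^2-4ks>0$ forces $d^3+d^2k+2d^2s-2dks-4ks^2=d^2(d+k+2s)-2ks(d+2s)>0$, so at most three patterns are possible. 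All three occur with each sign of $g$, giving $6$. No root counting is needed for the lower bound. Your count triples are a legitimate extra invariant to add on top of the full sign vector, but they cannot replace the omitted factors.
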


\begin{proof}
It may not be a good idea to apply \emph{ConnectedCom} directly to a list of $197232$ points. We employ some divide and conquer technique by dividing the original set into many smaller subsets and apply \emph{ConnectedCom} to each subset separately. By doing so, parallel computing is possible. In our case, we partition the set of $197232$ points into a union of $78$ subsets. For each subset, we employ further divide and conquer.

Next, we describe how those $78$ subsets are obtained and what further divide and conquer can be done in each subset. Recall $\mathcal{P}=(\mathbb{R}^{2}\times\{d>1\})\setminus \{q=0\}$. Note among the $197232$ points, $100771$ of them are sample points for $\{ gq>0, d>1\}$ in $\mathcal{P}$ and $96461$ of them are sample points for $\{ gq<0, d>1\}$ in $\mathcal{P}$. There are $12$ factors in $q$, two of them are $(d-1)$ and $d$, which can be ignored for we require $d>1$. For the remaining $10$ factors, there are $2^{10}$ different combinations of signs. From the $100771$ sample points, only $40$ combinations of the signs are realized, while only $38$ combinations of the signs are realized for the $96461$ sample points. 

We also find that there are $31$ same combinations of the signs both in the $40$ ones and the $38$ ones. And, there are $47$ combinations of the signs in the $40$ ones or the $38$ ones. It means that, among the $2^{10}$ different combinations of the signs from the $10$ factors in $q$, exactly $47$ of them are realized. Our parameter space $\mathcal{P}$ can be partitioned into a union of $47$ open components. Each component contains points with the same combination of the $10$ signs for factors in $q$. There are $31$ of them containing points with $g>0$ and points $g<0$. The rest $16$ components contain only points of the same sign for $g$. 

Since we consider the open connected components of $\mathcal{P}\setminus \{g=0\}$, each of the $31$ open components are further decomposed into two open components. Therefore, we need $78$ open components for $\mathcal{P}\setminus \{g=0\}$. Note $78=16+2(31)$. Each of the $78$ open component contains points with the same combination of the $10$ signs for factors in $q$ and the sign for $g$. Among them, $6$ components contain all sample points with positive $s$ and $k$. Since sample points with different combinations of the signs cannot be in the same open connected component, the number $78$ provides a lower bound for the total number of the open connected components in $\mathcal{P}\setminus \{g=0\}$. Similarly, the number $6$ is a lower bound for the  total number of the open connected components in $(\{s>0\}\times\{k>0\}\times\{d>1\})\setminus \{gq=0\}$. 

After partitioning the set of $197232$ points into the union of $78$ smaller subsets, in each subset $S_{0}$, we reduce the number of sample points as follows. We first collect points with the same $s$ coordinates into $n_{1}$ groups. We then apply \emph{ConnectedCom} to all those groups separately. With a possible refined subset, $S_{1}$, obtained from removing redundant points, we collect points with the same $k$ coordinates into $n_{2}$ groups. Applying \emph{ConnectedCom} to those groups, separately, and removing redundant points, we again obtain another possible refined subset, $S_{2}$. Collecting points with the same $d$ coordinates into $n_{3}$ groups, and applying \emph{ConnectedCom} to those groups, separately, we obtain $S_{3}$. Finally, we apply \emph{ConnectedCom} on $S_{3}$ to obtain $S_{4}$. Let $\#S$ denote the cardinality of the set $S$. We have $\#S_{i-1}\geqslant \#S_{i}\geqslant n_{i}$ for $i=1,2,3,4$, where $n_{4}=1$. We totally apply \emph{ConnectedCom} $(n_{1}n_{2}n_{3}+1)$ times. 

For example, let's consider the subset containing the sample point $(s,k,d)=(-80,-\frac{21}{2},\frac{1289}{16})$. For such subset, $\#S_{0}=4384$. In our reductions in four stages, we have $n_{1}=1511$, $\#S_{1}=1561$, $n_{2}=357$, $\#S_{2}=372$, $n_{3}=160$, $\#S_{3}=176$, and, finally, $\#S_{4}=1$. In fact, among the $78$ subsets, $\#S_{0}$'s are in between $42$ to $7458$ and there are $34$ of $\#S_{0}>2500$. On the other hand, $\#S_{4}$'s are in between $1$ to $13$, and there are $48$ of $\#S_{4}=1$ and $65$ of $\#S_{4}\leqslant 5$. Recall that we have $16$ open components with the same combination of the $10$ signs in $q$ containing only points of the same sign for $g$. Among these $16$ components, there are $12$ of $\#S_{4}=1$. On the other hand, we have $31$ open components with the same combination of the $10$ signs in $q$ containing points with $g>0$ and points with $g<0$. From the information of $\#S_{4}$'s, there are $16$ of them being proved to have just $2$ open connected components.

In summary, the set of $100771$ sample points with $gq>0,d>1$ partitioned into a union of $40$ subsets is reduced to a set of $99$ sample points. Among the $40$ subsets, $3$ of them contain all points with positive $s$ and $k$, and the union of them is reduced to a set of only $16$ sample points. On the other hand, the set of $96461$ sample points with $gq<0,d>1$ partitioned into a union of $38$ subsets are reduced to a set of $118$ sample points. Among the $38$ subsets, $3$ of them contain all points with positive $s$ and $k$, and the union of them is reduced to a set of only $27$ sample points. 
 
We make a final remark on applying \emph{ConnectedCom} on each of the $78$ subsets. Although we apply \emph{SemialgebraicComponentInstances} on $\{ gq>0, d>1\}$ and $\{ gq<0, d>1\}$, when we try to test if two points are connected by a line segment, we only need to test if the line pass through $gw=0$, where $w=(d^2 - 4 k s) (d k + d s + 2 k s)(d^3 + d^2 k + 2 d^2 s -  2 d k s - 4 k s^2)$ is the non-linear factors in $q$. In each of the subset $S_{0}$, the signs of all the factors are fixed. For each linear factor $l$, it determines two half spaces. If two points have the same signs, they must belong to the same half space and, therefore, the line segment determined by them will never pass $l=0$.   
\end{proof}

In fact, in subsection $\ref{proof}$, when we prove our generic results, we will show there are $15$ points that can be further removed, since the open connected components containing them in the region of $d<2$. Therefore, we can eventually reduce the number of sample points to $202$ from a set of $197232$ sample points. 

\subsection{Proof of theorem $1$}\label{proof}
In this subsection, we will prove Theorem \ref{maint} in the following subsubsections. In the first one, we prove the finiteness results. In the second one, we prove generic results for $d=2$. Finally, in the third subsubsection, we prove generic results for all $d>1$. Corollary \ref{main} follows directly from Theorem \ref{maint} and the results in the subsubsection. 

\subsubsection{For the finiteness results}
Recall that, in Lemma \ref{h}, we compute a Gr\"{o}bner basis $\mathcal{G}=\{g_{1},\dots,g_{11}\}$ for $\mathcal{F}=\{f_{1},f_{2},f_{3},f_{4}\}$ over the field $\mathbb{Q}(s,k,d)$. Also, by Lemma \ref{h}, we know that $\mathcal{G}^{s,k,d}$ is a Gr\"{o}bner basis of $\mathcal{F}^{s,k,d}$ for $q(s,k,d)\neq0$. For those $(s,k,d) \in \mathcal{P}$, the leading power products of $r_{i}$'s are $ r_{1}r_{3},r_{2}^2,r_{1}r_{2},r_{1}^2,r_{4}^3,r_{3}r_{4}^2,r_{1}r_{4}^2,r_{3}^{2}r_{4},r_{2}r_{3}r_{4}, r_{3}^3, \\ r_{2}r_{3}^2 $. In particular, the power products $r_{2}^2,r_{1}^2,r_{4}^3,r_{3}^3$ prove the finiteness. The number of power products not divided by any of the $11$ power products is $12$, which gives an upper bound for the number of complex zeros \cite{CLO}. 

Next, we consider those $(s,k) \in \mathbb{R}^{2}$ with $s,k\neq 0$ and $d\in \mathbb{N}$ with $d>1$ such that $q(s,k,d)=0$. They are zeros of $(d + k)(d + 2 k-1)(d + s)(d + 2 s-1)(k + s)(d^2 - 4 k s)(d k + d s + 2 k s)(d^3 + d^2 k + 2 d^2 s - 2 d k s - 4 k s^2)$ that is the product of eight factors of $q$. All the factors are linear polynomials in either $s$ or $k$. We treat each factor separately. By solving $s$ or $k$ in terms of the other parameters, we can reduce the number of parameters to two and obtain eight systems of polynomial equations in only two parameters. 

For each one of the eight systems, using the similar technique in computing generic Gr\"{o}bner bases \cite{T} by a block order where $r_{1},r_{2},r_{3},r_{4}$ are in the graded reverse lexicographic order and two parameters also treated as variables are in a lexicographic order, we obtain a polynomial $u$ in two variables. Now, parameters with $u\neq 0$ are the generic cases where the finiteness and upper bounds of the complex zeros can be obtained as we did for $q\neq 0$. The upper bounds can be $10$ or $12$.

Again, the parameters with $u=0$ need to be treated differently. For each factor of $u$, we compute a Gr\"{o}bner basis for a system of $5$ equations ($4$ from $f_{1},f_{2},f_{3},f_{4}$ and one factor of $u$) in $6$ variables ($r_{1},r_{2},r_{3},r_{4}$, a variable $t \in \{s,k\}$, and $d$) with the lexicographic order where $r_{4}>r_{3}>r_{2}>r_{1}>t>d$. In those Gr\"{o}bner bases, we can always find a polynomial in only the variables of $r_{1}$ and $t,d$ where the degree in $r_{1}$ is $4,6,8$, or $10$, and the \emph{leading coefficients} are integral polynomials only in $d$. Also, we can always find another polynomial in $r_{1},r_{2}$ and $t,d$ where the degree of $r_{2}$ is one. From these, we prove the finiteness of the complex zero with varies upper bounds $4,6,8$, or $10$.

\subsubsection{For the case of $d=2$}
Here we summarize our generic results for $d=2$ in the following proposition.
\begin{proposition}\label{d2}
Let $d=2$. There are generically $0,1,2$ real zeros with $r_{1}>r_{2}>0$, $0,1,2$ real zeros with $0>r_{1}>r_{2}$, and $1,2,3,4$ real zeros with $r_{1}>0>r_{2}$. The total numbers of real zeros with $r_{1}>r_{2}>0$ or $0>r_{1}>r_{2}$ are generically $0,1,2,3,4$. The total numbers of real zeros with $r_{1}>r_{2}$ are generically $1,2,3,4,5$. 

For $s,k>0$, we have the following enumerations. There are generically $0,2$ real zeros with $r_{1}>r_{2}>0$, $0,2$ real zeros with $0>r_{1}>r_{2}$, and one zero with $r_{1}>0>r_{2}$. The total numbers of real zeros with $r_{1}>r_{2}>0$ or $0>r_{1}>r_{2}$ are generically $0,2,4$. The total numbers of real zeros with $r_{1}>r_{2}$ are generically $1,3,5$. 
\end{proposition}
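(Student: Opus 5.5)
Fix $d=2$ and view everything in the $(s,k)$-plane. The plan is to reduce the $d=2$ case to a two-parameter cell decomposition, count roots at one sample point per cell, and read off the possible numbers.

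\textbf{Step 1: the bifurcation curve and sample points.} Specialize the bifurcation polynomial $g$ of Lemma \ref{g} and the polynomial $q$ of Lemma \ref{h} at $d=2$. This gives a curve $g(s,k,2)q(s,k,2)=0$ in the $(s,k)$-plane.

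\begin{itemize}
\item Apply \emph{SemialgebraicComponentInstances} to $\{g q>0\}$ and $\{gq<0\}$ with $d=2$, to get at least one rational sample point in each open connected component of the complement.
\item This is a two-dimensional CAD, so it is far cheaper than the three-dimensional computation of Proposition \ref{197323}.
\item Reduce the list with \emph{ConnectedCom}, exactly as in Proposition \ref{217}. Whether the list is minimal does not matter, since redundant points only cost extra root counts.
\end{itemize}

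Within each component the real zeros of $\mathcal{F}^{s,k,2}$ are simple and vary continuously. As noted before the subsection, no zero can cross $r_1=0$, $r_2=0$ or $r_1=r_2$. Hence the counts in the three sign regions $r_1>r_2>0$, $0>r_1>r_2$ and $r_1>0>r_2$ are constant on each component.

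\textbf{Step 2: rigorous root counts at each sample point.} At each rational sample point, compute the Gröbner basis of the integer system $\mathcal{F}^{s,k,2}$; by Lemma \ref{h} the specialization of $\mathcal{G}$ already serves. Then form Hermite quadratic forms $H_h$ for suitable polynomials $h$. The signature of $H_h$ equals $\#\{h>0\}-\#\{h<0\}$ over the real zeros, and with $h=1$ it gives the number of real zeros.

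\begin{itemize}
\item Using $h\in\{1,r_1,r_2,r_1r_2,r_1-r_2,\dots\}$ and their products, the standard Ben-Or–Kozen–Reif / BPR sign determination yields the number of real zeros in each of the three sign regions.
\item Alternatively, compute a lex Gröbner basis. It should have a shape basis with a univariate polynomial in $r_1$ and $r_2$ linear in $r_1$. Then use \emph{CountRoots} on a univariate eliminant in a separating linear form.
\end{itemize}

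Both routes are exact because the data are integer polynomials.

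\textbf{Step 3: tabulate.} Collect the triples (count with $r_1>r_2>0$, count with $0>r_1>r_2$, count with $r_1>0>r_2$) over all components. Read off the realized sets: $\{0,1,2\}$ for each concave type, $\{1,2,3,4\}$ for the convex type, and the sums $\{0,\dots,4\}$ and $\{1,\dots,5\}$.

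For the positive-mass statement, restrict to sample points with $s,k>0$. Then check that the convex count is always $1$ and the concave counts are in $\{0,2\}$.

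\textbf{Main obstacle.} The main obstacle is making sure every component is represented. The CAD guarantees this, but only if it is run on the full curve $gq=0$ restricted to $d=2$, including the factors of $q$. One subtlety needs care: at $d=2$ some factor of $g$ could vanish identically or merge with factors of $q$. If so, I would recompute the $d=2$ bifurcation polynomial directly from the Hermite matrix specialized at $d=2$, rather than specializing $g$.

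Mild symmetry can halve the work for the positive-mass part, but I would not rely on it.
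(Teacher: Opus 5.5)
Your proposal is correct and follows essentially the paper's route: a CAD for $gq\neq 0$ at $d=2$ (reduced with \emph{ConnectedCom}), then at each rational sample point the Hermite signatures for the eight products of $r_1-r_2$, $r_1$, $r_2$, with the resulting counts tabulated, including the restriction to $s,k>0$. Your extra safeguard is sensible and is not spelled out in the paper: if $g(s,k,2)$ degenerates, you would recompute the $d=2$ bifurcation polynomial directly from the specialized Hermite matrix.
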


\begin{proof}
Following the same procedure described in subsection $\ref{openc}$, we obtain $93$ sample points from applying \emph{SemialgebraicComponentInstances} for computing sample points and \emph{ConnectedCom} for the reduction of the number of them. There are $15$ sample points with positive $s$ and $k$.

Next, we describe how to count the numbers of real zeros of an integer polynomial system for the cases of $r_{1}>r_{2}>0$, $0>r_{1}>r_{2}$, and $r_{1}>0>r_{2}$. Our method is based on Hermite's root counting theorem. Given a polynomial $v$, the signature of the matrix obtained from the Hermite's quadratic form with respect to $v$ is the number of real zeros for $v>0$ minus that of $v<0$ \cite{BPR}. In particular, when $v=1$, the signature gives the number of distinct real zeros. Using $v_{1}=1,v_{2}=r_{1}-r_{2},v_{3}=r_{1},v_{4}=r_{2},v_{5}=(r_{1}-r_{2})r_{1},v_{6}=(r_{1}-r_{2})r_{2},v_{7}=r_{1}r_{2},v_{8}=(r_{1}-r_{2})r_{1}r_{2}$, we obtain numbers of real zeros for $r_{1}>r_{2}>0$, $0>r_{1}>r_{2}$ and $r_{1}>0>r_{2}$. Computing $93$ systems, we obtain our generic results. Among those $93$ cases, $15$ of them correspond to positive $s,k$. From those systems, we obtain generic results for positive $s,k$. 

Let $m$ be the number of real zeros with $r_{1}>r_{2}>0$ or $0>r_{1}>r_{2}$, and $n$ be the number of real zeros with $r_{1}>0>r_{2}$. Then $(m,n)$ can only be $(0,1),(0,2),(0,3),(0,4),(1,1),(1,2),(1,3),(2,1),(2,2),(3,1),(3,2)$, or $(4,1)$. In Table \ref{d2table}, we show the numbers of sample points in four quadrants in the $sk$-plane that give each of the $12$ $(m,n)$'s and pick a representative sample point if such number is positive. In particular, it is clear to see from Table \ref{d2table} which of the pairs $(m,n)$ are possible for all $(s,k)$ in the same quadrant. 

\renewcommand{\arraystretch}{2}
\begin{table}[h]
\begin{center}
\begin{tabular}{|c|c|c|c|c|c|}
\hline
$(m,n)$ & $s,k>0$ & $s<0,k>0$ &$s,k<0$& $s>0,k<0$   \\ \hline 
$(0,1)$ & $5,(9,\frac{3}{2})$ & $3,(-4,3)$ & $0$ & $2,(\frac{9}{8},-11)$\\ \hline
$(0,2)$ & $0$ & $0$ & $5,(-\frac{15}{8},-\frac{5}{8})$ & $0$\\ \hline
$(0,3)$ & $0$ & $0$ & $4,(-\frac{15}{8},-\frac{1}{4})$ & $0$\\ \hline
$(0,4)$ & $0$ & $0$ & $2,(-\frac{15}{8},-\frac{15}{8})$ & $0$\\ \hline
$(1,1)$ & $0$ & $1,(-\frac{177}{16},12)$ & $15,(-\frac{67}{128},-18)$ & $1,(\frac{9}{4},-\frac{71}{32})$\\ \hline
$(1,2)$ & $0$ & $5,(-\frac{1}{4},6)$ & $7,(-\frac{1}{4},-5)$ & $3,(\frac{1}{4},-1)$\\ \hline
$(1,3)$ & $0$ & $0$ & $9,(-\frac{15}{8},-3)$ & $0$\\ \hline
$(2,1)$ & $9,(\frac{1}{4},2)$ & $2,(-\frac{177}{16},1)$ & $0$ & $3,(\frac{1}{4},-3)$\\ \hline
$(2,2)$ & $0$ & $5,(-\frac{1}{4},\frac{1}{6})$ & $1,(-\frac{177}{16},-3)$ & $4,(\frac{1}{4},-\frac{97}{384})$\\ \hline
$(3,1)$ & $0$ & $1,(-\frac{11}{5},\frac{71}{32})$ & $0$ & $1,(\frac{17}{8},-\frac{25}{12})$\\ \hline
$(3,2)$ & $0$ & $2,(-\frac{1}{4},1)$ & $0$ & $2,(\frac{1}{4},-\frac{7}{32})$\\ \hline
$(4,1)$ & $1,(\frac{1}{4},1)$ & $0$ & $0$ & $0$\\ \hline
\end{tabular}
\caption{Numbers of sample points in four quadrants giving $12$ pairs of $(m,n)$ and examples for the cases of posive numbers}\label{d2table}
\end{center}
\end{table}\renewcommand{\arraystretch}{2}
\end{proof}

\subsection{For the general case of $d>1$}
Here, we first reduce the number of $217$ to $202$ for sample points in the general case of $d>1$. Note that, among the $217$ sample points, we find $15$ sample points where the systems obtained from evaluating at them for (\ref{polygsys}) either have $4$ real common zeros with $r_{1}>r_{2}>0$ (for $9$ sample points) or have $4$ real common zeros with $0>r_{1}>r_{2}$ (for $6$ sample points). They all have $d$-coordinates slightly greater than $1$ and strictly smaller than $2$. We will claim that all the open connected components they belong to are contained in the region of $d<2$. 

In fact, by Proposition \ref{d2}, we know, for $d=2$, the number of real common zeros with $r_{1}>r_{2}>0$ is $0,1$ or $2$ and that with $0>r_{1}>r_{2}$ is also $0,1$ or $2$. Therefore, all the open connected components containing those $15$ sample points do not intersect with the plane $d=2$. Since all those $15$ sample points are in the region of $d<2$, all these components must also be in the region of $d<2$. Therefore, those $15$ points can be removed from our list of $217$.  

For the remaining $202$ sample points, we again use the same method as in the proof of Proposition \ref{d2} to count the numbers of real zero with $r_{1}>r_{2}>0$, $0>r_{1}>r_{2}$, and $r_{1}>0>r_{2}$. Again, let $m,n$ be defined as in the proof of Proposition \ref{d2}. Each one of the $202$ sample points determines a pair of $(m,n)$ that belongs to one of the $12$ cases for $d=2$. Table \ref{dtable} shows the numbers of sample points in each of the $12$ cases. For those $43$ sample points with positive $s$ and $k$ among the $202$ ones, there are $8,19,$ and $16$ of them in the cases of $(m,n)=(0,1),(2,1),$ and $(4,1)$, respectively. 

\renewcommand{\arraystretch}{2}
\begin{table}[h]
\begin{center}
\begin{tabular}{|c|c|c|c|c|c|}
\hline
$(0,1)$ & $(0,2)$ & $(0,3)$ &$(0,4)$& $(1,1)$ & $(1,2)$ \\ \hline  
$13$ & $17$ & $10$ &$3$& $59$ & $31$   \\ \hline 
$(1,3)$ & $(2,1)$ & $(2,2)$ & $(3,1)$ & $(3,2)$ & $(4,1)$ \\ \hline
$9$ & $27$ & $11$ & $2$ & $4$ & $16$ \\ \hline 
\end{tabular}
\caption{Numbers of sample points giving $12$ paris of $(m,n)$}\label{dtable}
\end{center}
\end{table}\renewcommand{\arraystretch}{2}

\section{Generalized Laura-Andoyer Equations }

In this section we derive a set of Laura-Andoyer equations equivalent to the the central configuration equations for arbitrary dimension and number of bodies. Next we use Laura-Andoyer equations and Dziobek equations to obtain equations \eqref{algsys} (equations (10) in \cite{L} for $a=\frac{-3}{2}$. Here, we use a  approach different from the Leandro's one. We derive Laura-Andoyer equations for Dziobek configurations first from $(5)$. We find there are only two equations for all $d$ and use them to obtain our algebraic equations involving parameters $s,k,d,a$. In our approach, the hole of the symmetry group of the $(2,d)-$cc's in the obtaining of the  equations become clear.

Let $x=(x_1,...,x_{d+h})$ be a central configuration of dimension $d$. We can suppose without loss of generality that each $x_{i}\in \mathbb{R}^{d}$. The  \emph{matrix of the configuration} $x$ is the $(d+h)\times (d+h)$ given by

$$X=\left(\begin{array}{ccc}
    1 & \ldots & 1 \\
    x_{11} & \ldots & x_{1(d+h)} \\
    \vdots &\ddots& \vdots\\
    x_{d1} & \ldots & x_{d(d+h)}\\
    0 & \cdots &0\\
    \vdots &\ddots& \vdots\\
    0 & \cdots &0
    \end{array}\right).$$
    
 Since the rank of $X$ is $d+1$, we have that $|X_{l_{1}...l_{h-1}}|\neq0$, for some choice of $l_1,...,l_{h-1}$.

 Denote by $x_{l_1\cdots l_{h}}$, where $1 \leq l_1<l_2<\cdots<l_h \leq n$, the subconfiguration of $d$ bodies obtained from the configuration $x$ by removing the bodies $x_{l_1},...,x_{l_h}$. The $d\times d$ configuration matrix of $x_{l_1...l_h}$ will be denoted by
$X_{l_1...l_{h}}$, and we write $w(x_{l_1...l_{h}})=|X_{l_1...l_h}|$.    

%Let $\hat{x}_{k}$ be the subconfiguration obtained from $x$ by removing the bodie $x_k$. The matrix $X_{k}$ of the configuration $\hat{x}_{k}$ is obtained from $X$ by removing the $k$-th column and the last row. The determinant of $X_{k}$ is denoted by $w(\hat{x}_{k})=|X_{k}|$. 
 
 In the next proposition we obtain the equivalence between central configuration equations end Generalized Laura-Andoyer Equations. To our current knowledge, this is the first equivalence result
 for central configurations with dimension $d>2.$
 
 %The following formula will be useful:
%\begin{equation}\label{equseful}
%\tau(i,k)(x_1-x_i)\wedge \cdots \wedge (x_n - x_i ) =w(\hat{x}_k)e_1 \wedge\cdots \wedge e_d,
%\end{equation}
%where $i$ and $k$ are different numbers from $1$ to $d+2$, the factors $(x_k-x_i )$, $(x_i-x_i)$ are omitted, $\{e_1,...,e_d\}$ is the standard basis for $\mathbb{R}^d$,  and 
%$$\tau(i,k)=\begin{cases}
%(-1)^{i+1} &\text{ if } i<k, \\
 %(-1)^i &\text{ if } i>k.
%\end{cases}$$

\begin{proposition}\label{LAG}
Let $x=(x_1,...,x_{d+h})$ be a configuration with $d+h$ bodies in $\mathbb{R}^{d}$. $x$ is a  central   configuration of dimension $d$ if and only if $x$ satisfies the \emph{Laura-Andoyer equations}
\begin{equation}
L_{ijl_{1}\cdots l_{h}}=\sum_{s=1}^{h}(-1)^s m_{l_{s}}(R_{il_{s}}-R_{jl_{s}})\triangle_{l_1 \cdots l_{s-1}l_{s+1}\cdots l_h}=0,
\end{equation}
where 
$$\triangle_{l_1...l_{h}} =(-1)^{\sum_{s=1}^{k}l_{s}}w(x_{l_{1}...l_{h}}).$$
\end{proposition}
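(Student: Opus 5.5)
The plan is to rewrite the central configuration equations (\ref{eqc}) as a statement about linear dependence, and then read off the Laura-Andoyer equations as the vanishing of suitable maximal minors, obtained by Cramer's rule / Laplace expansion.

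\textbf{Step 1: reduce to a rank condition.} Since $\sum_i m_i\gamma_i=0$ by the symmetry $R_{ij}=R_{ji}$, equations (\ref{eqc}) hold for some $\lambda$ if and only if there is $\lambda$ with $\gamma_i+\lambda x_i=\lambda c$ for all $i$. Writing $\gamma_i=\sum_j m_jR_{ij}x_j-(\sum_{j\ne i}m_jR_{ij})x_i$, I would form the $n\times n$ matrix $A=(A_{ij})$ with $A_{ij}=m_jR_{ij}$ for $j\neq i$ and a diagonal entry absorbing the $x_i$ terms. The condition becomes: every row of $A$, as a vector of coefficients, gives a combination $\sum_j A_{ij}x_j$ that is independent of $i$ after adding $\lambda x_i$. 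Equivalently, for every pair $i,j$, the vector $v^{ij}=(m_l(R_{il}-R_{jl}))_l$ (with suitable diagonal corrections) satisfies $\sum_l v^{ij}_l x_l\in\mathrm{span}(x_i-x_j)$. Moreover the coefficients sum to a multiple compatible with the affine structure.

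\textbf{Step 2: eliminate $\lambda$ via the affine kernel.} The relations $\sum_l \alpha_l x_l=0$ with $\sum_l\alpha_l=0$ form the kernel of $X$, which has dimension $h-1$ because $X$ has rank $d+1$. A vector $\beta$ lies in the row space of $X$ if and only if all $(d+2)$-minors of the matrix obtained by adjoining $\beta$ to the nonzero rows of $X$ vanish.

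\textbf{Step 3: expand these minors.} Expanding along $\beta$, each such minor, for a choice of $d+2$ columns, is $\sum_s \pm\beta_{l_s}w(\cdot)$. Taking $\beta=v^{ij}$ restricted to columns avoiding $i,j$, so that the entries in columns $i,j$, where $\lambda$ and the diagonal terms live, are cancelled by choosing $i,j$ among the retained columns, should yield exactly $L_{ijl_1\cdots l_h}$. The sign $(-1)^s$ and the factor $(-1)^{\sum l_s}$ in $\triangle$ come from the Laplace expansion. This step requires reindexing so that the removed set is $\{l_1,\dots,l_h\}$ minus one index, which gives $d+1$ remaining columns and hence the $(d+1)\times(d+1)$ determinants $w$.

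\textbf{Step 4: converse.} For the converse I would use the fact that some $w(x_{l_1\cdots l_{h-1}})\neq0$, so the corresponding $d+1$ bodies are affinely independent. I would solve for $\lambda$ and $c$ using these bodies, which is possible since any assignment on an affine basis extends affinely, and then use the Laura-Andoyer equations with the index sets containing that basis to propagate the central configuration equation to every other body by Cramer's rule.

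\textbf{Main obstacle.} The delicate part is Step 3 and the converse bookkeeping: showing that the vanishing of these particular minors, rather than all minors, is sufficient, which is where the nondegenerate $w$ is used, and getting the signs to match $\triangle$ exactly.
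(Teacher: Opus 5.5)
Your forward direction (Steps 2--3) tests the wrong subspace. The central configuration equations give $\gamma_i-\gamma_j=\mu(x_i-x_j)$. Equivalently, if $\beta$ is the coefficient vector of $\gamma_i-\gamma_j$, then $\beta-\mu(e_i-e_j)$ is an affine relation among the bodies, i.e.\ an element of $\ker X$. Adjoining $\beta$ to the rows of $X$ and asking that the $(d+2)$-minors vanish tests instead whether $\beta\in\mathrm{row}(X)=(\ker X)^{\perp}$, which is a different condition. Take the unit-mass square $x_1=(1,0)$, $x_2=(0,1)$, $x_3=(-1,0)$, $x_4=(0,-1)$ with $a=-1$. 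Then $\gamma_1-\gamma_2=-\tfrac74x_1+\tfrac74x_2-\tfrac14x_3+\tfrac14x_4$, and the vector the equations supply is $\beta-\mu(e_1-e_2)=(-\tfrac14,\tfrac14,-\tfrac14,\tfrac14)$. This is a nonzero element of $\ker X$, hence not in the row space.

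The mismatch also shows in the shape of the equations, and no choice of columns repairs it. Expanding a minor along the adjoined row gives $d+2$ terms, and the cofactor of $\beta_c$ is a minor on columns \emph{not} containing $c$. By contrast, $L_{ijl_1\cdots l_h}$ has $h$ terms, and the coefficient of $m_{l_s}(R_{il_s}-R_{jl_s})$ is $\triangle_{l_1\cdots\widehat{l_s}\cdots l_h}$. That is ($\pm$) the $w$ of the bodies in $S\cup\{l_s\}$, where $S$ is the complement of $\{l_1,\dots,l_h\}$, and this set \emph{does} contain $l_s$.

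Complementary minors are the signature of the duality between $\ker X$ and $\mathrm{row}(X)$. A correct version of your idea would:
\begin{itemize}
\item adjoin $\beta$ to a basis of the $(h-1)$-dimensional space $\ker X$;
\item take the $h\times h$ minor on the columns $l_1,\dots,l_h$, which avoid $i,j$, so the $\mu(e_i-e_j)$ term drops out;
\item use that the $(h-1)$-minors of a kernel basis are, up to sign and a common factor, the complementary $(d+1)$-minors of $X$.
\end{itemize}
The paper does this more directly: it wedges $\gamma_i-\gamma_j=\lambda(x_i-x_j)$ with $v^i_{l_1\cdots l_h}$. That $(d-1)$-vector contains $x_j-x_i$ and every $x_p-x_i$ with $p\in S$, so only the $h$ terms indexed by $l_1,\dots,l_h$ survive.

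Your Step 4 does not yet contain the key idea of the converse. Extending $\gamma$ affinely from an affine basis produces an arbitrary affine map. Showing that this map is $x\mapsto\lambda(x-c)$ with a single $\lambda$ is exactly what must be proved, and ``propagate by Cramer's rule'' does not say how the Laura--Andoyer equations force it. The paper proceeds as follows:
\begin{itemize}
\item read the equations as $(\gamma_i-\gamma_j)\wedge v^i_{l_1\cdots l_h}=0$;
\item multiply by $m_j$, sum over $j$, and use $\sum_j m_j(x_j-q)\wedge\gamma_j=0$ to get $(c-x_i)\wedge\gamma_i=0$, i.e.\ $\gamma_i=\lambda_i(x_i-c)$;
\item argue separately that the $\lambda_i$ coincide.
\end{itemize}

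Alternatively, your Step 1 could carry the converse, but the ``equivalently'' there needs two facts you do not prove. First, for fixed $i,j$, the equations $L_{ijl_1\cdots l_h}=0$ over all admissible $l$'s are equivalent to $(\gamma_i-\gamma_j)\wedge(x_i-x_j)=0$, because the $v^i_{l_1\cdots l_h}$ span $(x_j-x_i)\wedge\wedge^{d-2}\mathbb{R}^d$. Second, the resulting ratios $\mu_{ij}$ are all equal, which you can see by comparing them around non-collinear triples (these exist since $d\ge2$). Then $\sum_i m_i\gamma_i=0$ and $M\neq0$ give $\gamma_i=\lambda(x_i-c)$.
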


\begin{proof}
By the definition of the $\gamma_i$'s
\begin{align}\label{eqN5}
\gamma_i-\gamma_j=&(m_i+m_j)R_{ij}(x_j-x_i)+ \\ \nonumber
                  &\sum_{l\neq i,j}m_l\big(R_{il}(x_l-x_i)-R_{jl}(x_l-x_i)+ R_{jl}(x_{l}-x_i)\big). \nonumber
\end{align}
Consider the $(d-1)$-dimensional exterior product given by
\begin{equation*}
 v^{i}_{l_{1}...l_{h}}=(x_1-x_i)\wedge...\wedge (x_n-x_i) \in \wedge^{d-1}\mathbb{R}^d,
\end{equation*}
in which the terms  $(x_i-x_i),(x_{l_{1}}-x_i),...,(x_{l_{h}}-x_i)$ were omitted. Taking the wedge product of the central configuration equations with $v^{i}_{l_{1}...l_{h}}$, we obtain:

$$(\gamma_i-\gamma_j)\wedge v^{i}_{l_{1}...l_{h}} =\lambda(x_i-x_j)\wedge v^{i}_{l_{1}...l_{h}}=0.$$
By the definition of the $\gamma_i$`s we obtain:
\begin{align}
0=&\sum_{s=1}^{d+h}m_{s}(R_{is}-R_{js})(x_s-x_i)\wedge v^{i}_{l_{1}...l_{h}}\\
 =& \sum_{s=1}^{h}m_{l_{s}}(R_{i l_{s}}-R_{j l_{s}})(x_{l_{s}}-x_i)\wedge v^{i}_{l_{1}...l_{h}}\\
 =& \sum_{s=1}^{h}(-1)^{l_{s}-s-1}m_{l_{s}}(R_{i l_{s}}-R_{j l_{s}})\wedge v^{i}_{l_{1}...l_{s-1}l_{s+1}...l_{h}}
\end{align}
By using the definition of the $\triangle_i$, we get:

$$\sum_{s=1}^{h}(-1)^sm_{l_{s}}(R_{i l_{s}}-R_{j l_{s}})\triangle_{l_{1}...l_{s-1}l_{s+1}...l_{h}}=0$$

Conversely, suppose the Laura-Andoyer equations hold. Then we obtain 
\begin{equation}\label{rij} 
(\gamma_{i}-\gamma_{j})\wedge v^{i}_{l_1...l_h}=0,
\end{equation}
for all distinct $i,j,l_1,...,l_h$ with $i<j$, $l_1<...<l_h$. From these, we will derive that $\gamma_i=\lambda(x_i-c),$ for all $i$.

A key observation is the following identity which can be obtained from the definition of $\gamma_i$'s. For any $q\in \mathbb{R}^{d}$, we have
\begin{equation}\label{anyq} 
\sum_{j=1}^{n}m_{j}(x_j-q)\wedge \gamma_j=0.
\end{equation}

Equation (\ref{rij}) holds for any $i,j \in \{1,...,n\}$. Moreover, fixing $i$, we  have the following equations for all indices $j,p_{1},...,p_{d-2}\in \{1,...,n\}$.
\begin{equation}\label{wedge1} 
m_{p_{1}}(x_{p_{1}}-x_i)\wedge \cdots \wedge m_{p_{d-2}}(x_{p_{d-2}}-x_i)\wedge m_{j}(x_{j}-x_i)\wedge(\gamma_{i}-\gamma_{j})=0. 
\end{equation}

Taking the sum over $j$ for equations (\ref{wedge1}), we find the last two factors become 
$$ \sum_{j=1}^{n}m_{j}(x_j-x_i)\wedge \gamma_i -\sum_{j=1}^{n}m_{j}(x_j-x_i)\wedge \gamma_j,$$ where the first term can be simplified to $M(c-x_{i})\wedge \gamma_i$ and the second term is zero by equation (\eqref{anyq}). So, we have
\begin{equation}\label{wedge2} 
m_{p_{1}}(x_{p_{1}}-x_i)\wedge \cdots \wedge m_{p_{d-2}}(x_{p_{d-2}}-x_i)\wedge(M(c-x_{i})\wedge \gamma_i)=0. 
\end{equation}

Since $x$ have dimension $d$, the set $$\mathcal{B}=\{(x_{p_{1}}-x_i)\wedge \cdots \wedge(x_{p_{d-2}}-x_i): 1\leq p_1 < p_{2}< \cdots p_{d-2}\leq n\}$$ contains a basis of $\wedge^{d-2}\mathbb{R}^{d}$, by equations 
(\ref{wedge2}), $v\wedge M(c-x_{i})\wedge \gamma_i=0$ for all $v\in \mathcal{B}$. Hence $M(c-x_{i})\wedge \gamma_i=0$. Thus, there exist $\lambda_i \in \mathbb{R}$ such that $\gamma_i=\lambda_{i}(x_i-c),$ for all $i$. Next, we will show $\lambda_{i}$'s are all equals.

Since we consider $d\geq2$, $x_{1},...,x_{n}$ are not collinear. So, we can always find distinct $i$ and $j$ such that $x_{i},x_{j}$ and the center of mass $c$ are not collinear. We will show later that this implies $\lambda_{i}=\lambda_{j}$, denoted just by $\lambda$. Placing the remaining $x_{k}$'s into $\mathbb{R}^{d}$, we have either $x_{k},x_{i},c$ are not collinear, or $x_{k},x_{j},c$ are not collinear, or $x_{k}=c$. The first case gives $\lambda_{k}=\lambda_{i}=\lambda$, the second case gives $\lambda_{k}=\lambda_{j}=\lambda$. For the last case, $\gamma_k=\lambda(x_k-c)$ holds.

Next, we show that if $x_{i},x_{j},c$ are not collinear, then $\lambda_{i}=\lambda_{j}$. For convenience, we explain the case Dziobek case of $h=2$. One can obtain the same results for other $h$'s using similar arguments. Here we will use the fact that $x_1,...,x_{(d+2)}$ span $\mathbb{R}^{d}$. We may let $i=1,j=2$. Suppose $\lambda_{1}\neq\lambda_{2}$. Then $(x_{2}-x_{1})\wedge (\gamma_{1}-\gamma_{2})\neq 0$. For $d=2$, it violates $(\gamma_{1}-\gamma_{2})\wedge v^1_{34}=0$ in (\ref{rij}). For $d\geq3$, $x_{2}-x_{1}$ and $\gamma_{1}-\gamma_{2}$ span an affine subspace, denoted by $A_{2}$, of dimension $2$ containing $x_{1},x_{2},c,\gamma_{1},\gamma_{2}$. If $d=3$, (\ref{rij}) gives $(\gamma_{1}-\gamma_{2})\wedge v^1_{34}=(\gamma_{1}-\gamma_{2})\wedge v^1_{35}=(\gamma_{1}-\gamma_{2})\wedge v^1_{45}=0$, which implies $x_{3},x_{4},x_{5}$ are all on the plane $A_{2}$. This contradict to the fact that $x_{1},..,x_{5}$ span $\mathbb{R}^{3}$.

If $d>3$, among $x_{3},...,x_{(d-1)},x_{d},x_{(d+1)},x_{(d+2)}$, at least one of them must be not on $A_{2}$, say $x_{3} \notin A_{2}$. So, $x_{2}-x_{1},x_{3}-x_{1}$ and $\gamma_{1}-\gamma_{2}$ span an affine subspace, denoted by $A_{3}$, of dimension $3$ containing $x_{3}$ and $A_{2}$. Continuing this procedure, we obtain linearly independent vectors $x_{2}-x_{1},x_{3}-x_{1},...,x_{(d-1)}-x_{1}$ and $\gamma_{1}-\gamma_{2}$ spanning an affine subspace, denoted by $A_{(d-1)}$, of dimension $d-1$ containing $x_{d-1}$ and $A_{(d-2)}$. 

On the other hand, from (\ref{rij}), we have $(\gamma_{1}-\gamma_{2})\wedge v^1_{(d+1)(d+2)}=(\gamma_{1}-\gamma_{2})\wedge v^1_{d(d+2)}=(\gamma_{1}-\gamma_{2})\wedge v^1_{d(d+1)}=0$. These imply that $x_{d},x_{(d+1)},x_{(d+2)} \in A_{d-1}$. So, we obtain $x_1,...,x_{(d+2)} \in  A_{d-1}$, contradicting to the fact they span $\mathbb{R}^{d}$. \unskip\nobreak\hfill $\square$

\end{proof}

In the Dziobek case, we have the following:

\begin{proposition}\label{LA}
Let $x=(x_1,...,x_{(d+2)})$ be a configuration. $x$ is a Dziobek central   configuration of dimension $d$ if and only if $x$ satisfies the \emph{Laura-Andoyer equations}
\begin{equation}
L_{ijl_{1}l_{2}}=m_{l_{1}}(R_{il_{1}}-R_{jl_{1}})\triangle_{l_{2}}-m_{l_{2}}(R_{il_{2}}-R_{jl_{2}})\triangle_{l_{1}}=0,
\end{equation}
where $\triangle_{l}=(-1)^{(l+1)}w(x{_l})$, $i,j,l_1$, and $l_2$ are different numbers from $1$ to $d+2$, $i< j$ and $l_1<l_2$. 
\end{proposition}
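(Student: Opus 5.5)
The plan is to obtain Proposition \ref{LA} as the special case $h=2$ of Proposition \ref{LAG}, checking that the signs and index conventions collapse to the stated two-term formula. With $n=d+2$ and $h=2$, the general Laura--Andoyer sum has exactly two terms, $s=1$ and $s=2$:
\[
L_{ijl_1l_2}=-m_{l_1}(R_{il_1}-R_{jl_1})\triangle_{l_2}+m_{l_2}(R_{il_2}-R_{jl_2})\triangle_{l_1}.
\]
This is the negative of the expression in the statement, so the two vanishing conditions coincide. The factor $\triangle_{l}$ here is the determinant of the $(d+1)\times(d+1)$ configuration matrix of $x_l$, the subconfiguration with the single body $x_l$ removed, multiplied by $(-1)^{l}$ in the convention of Proposition \ref{LAG}. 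The statement uses $(-1)^{l+1}$ instead, which changes every $\triangle$ by the same global factor $-1$. Since each $L$ is linear in the $\triangle$'s, this again does not affect the equations $L=0$. I would record this sign bookkeeping explicitly.

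The next check is that the hypotheses match. A Dziobek configuration is a configuration of $d+2$ bodies spanning $\mathbb{R}^d$, i.e.\ a central configuration of dimension $d$ with $h=2$. The range of indices in the statement ($i<j$, $l_1<l_2$, all four distinct) is exactly the range used in Proposition \ref{LAG}. Hence the forward direction follows directly from the forward direction of Proposition \ref{LAG}.

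For the converse, Proposition \ref{LAG} is proved in detail precisely in the case $h=2$: the step showing $\lambda_i=\lambda_j$ is written only for the Dziobek case. So no extra argument is needed beyond citing it. For robustness I would re-verify two points of that converse in the $h=2$ setting.

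\begin{enumerate}
\item \emph{The wedge identity.} The relation $(\gamma_i-\gamma_j)\wedge v^i_{l_1l_2}=0$ is equivalent to $L_{ijl_1l_2}=0$. Here one expands $\gamma_i-\gamma_j$ and notes that the only surviving terms come from $x_{l_1}-x_i$ and $x_{l_2}-x_i$, each wedged against the remaining $d-1$ difference vectors, which yields $\pm\triangle_{l_2}$ and $\pm\triangle_{l_1}$ respectively.
\item \emph{Span argument.} The terms $m_i(R_{ij})\ldots$ involving $x_j-x_i$ are killed because $x_j-x_i$ appears in $v^i_{l_1l_2}$. The identity $\sum_j m_j(x_j-q)\wedge\gamma_j=0$ then gives $\gamma_i\parallel x_i-c$, and the affine-span argument gives a common $\lambda$.
\end{enumerate}

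The main obstacle is the sign bookkeeping in item 1: moving $x_{l_s}-x_i$ into position in the wedge produces the factor $(-1)^{l_s-s-1}$. This must be combined with the alternating sign from the definition of $\triangle$ to confirm that the two terms appear with opposite relative sign, as in the statement, rather than the same sign. I would settle this by an explicit check for $d=2$ with four bodies, comparing against the classical planar Laura--Andoyer equations. Since any global sign is irrelevant, only the relative sign needs verification.
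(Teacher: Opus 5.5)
Your proposal is correct and follows the paper's approach. The paper gives no separate proof of Proposition \ref{LA}; it presents it as the Dziobek case $h=2$ of Proposition \ref{LAG}. Your bookkeeping is the right check: the factors $(-1)^s$ for $s=1,2$ give the two terms opposite relative signs, and $(-1)^{l+1}=-(-1)^{l}$ is only a harmless global sign. Since that relative sign is already fixed by the $(-1)^s$ in Proposition \ref{LAG}, your proposed $d=2$ verification is unnecessary, and on its own it would not settle the sign for general $d$.
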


Laura-Andoyer equations are convenient to study symmetric central configurations. Consider $C_{d,h}$ the set of central configurations with $d+h$ bodies and dimension $d$. Consider $H$ a subgroup of the permutation group $S_{d+h}$. We define an action on the set $\mathcal{X}=\{x_1,...,x_{d+h}\}$ given by $\sigma \cdot x_i= x_{\sigma(i)}$, $\forall \sigma \in H$ . We say that the central configuration  $x=(x_1,...,x_n)$ is strongly symmetric with respect to $H$ if $r_{ij}=r_{\sigma(i)\sigma(j)}$,  $m_i=m_{\sigma(i)}$ and $\triangle_{l_1  \cdots l_{h_1}}=\triangle_{\sigma (l_1) \cdots  \sigma(l_{h-1})}.$ The subgroup $H$ also acts naturally on the set of Laura-Andoyer equations in the following way: $\sigma \cdot L_{ijl_{1}\cdots l_{h}}=L_{\sigma(i)\sigma(j)\sigma(l_{1})\cdots \sigma(l_{h})}$. 

Let $C^{H}_{d,h}$ be the set central configurations with $d+h$ bodies and dimension $d$ strongly symmetric to $H$. Then, 

$$\sigma \cdot L_{ijl_{1}\cdots l_{h}}=L_{ijl_{1}\cdots l_{h}},$$ 

 $\forall x\in C^{H}_{d,h}$ and $\sigma \in H$. Note that the representative of the orbits of the action of $H$ in the Laura-Andoyer equations are a symmetry-reduced set of algebraic equations for $C^{H}_{d,h}$.

We will apply this approach in detail to obtain a symmetry-reduced set of Laura-Andoyer equations for the $(2,d)-$cc's.

%\begin{proof}
%By the definition of the $\gamma_i$'s
%\begin{align*}\label{eqN5}
%\gamma_i-\gamma_j=&(m_i+m_j)R_{ij}(x_j-x_i)+ \\ \nonumber
%                  &\sum_{k\neq i,j}m_k\big(R_{ik}(x_k-x_i)-R_{jk}(x_k-x_i)+ R_{jk}(x_{j}-x_i)\big). \nonumber
%\end{align*}

%Consider the $(d-1)-$dimensional multivectors of the form
%$$v_{ikl} = (x_1-x_i)\wedge \cdots \wedge (x_n-x_i),$$
%where $i, k,l\in\{1,...,n\}$, $k<l$ and the  factors $(x_i-x_i)$,$(x_k-x_i)$ and $(x_l-x_i)$ are omitted. Note that 
%$$(\gamma_i-\gamma_j)\wedge v_{ikl}=\lambda(x_i-x_j)\wedge v_{ikl}=0.$$
 
%To derive the Laura-Andoyer equations we need to compute the left side of the equation above.  We have three cases to consider:  $k<l<i$, $k<i<l%$ and $i<k<l$. We will do only the first case. The other cases are analogous.
%\begin{align*}
%0=(\gamma_{i}-\gamma_{j})\wedge v_{ikl}=&m_{k}(R_{ik}-R_{jk})(x_{k}-x_{i})\wedge v_{ikl}+\\
%                                      & m_l(R_{il}-R_{jl})(x_l-x_i)\wedge v_{ikl}=0.\end{align*}

%Applying formula \eqref{equseful} we get
%\begin{align*}
%0=(\gamma_{i}-\gamma_{j})\wedge v_{ikl}=&m_k(R_{ik}-R_{jk})(-1)^{k-1}(-1)^{i}w(\hat{x}_{l})e_1\wedge...\wedge e_{d}+\\ \nonumber
%&m_l(R_{il}-R_{jl})(-1)^{l-2}(-1)^{i}w(\hat{x}_{k})e_1\wedge...\wedge e_{d} \nonumber.
%\end{align*}
%This implies that
%$$m_k(R_{ik}-R_{jk})(-1)^{(k-1+i+l+1)}\triangle_{l}+m_l(R_{il}-R_{jl})(-1)^{(l-2+i+k-1)}\triangle_{k}=0.$$
%The result is proved. 
%\end{proof}

The set of the Laura-Andoyer equations satisfied by a Dziobek configuration is given by
$$LA_{d+2}=\{L_{ijl_1l_2}: i,j,l_1,l_2 \text{ are distinct}, i<j, l_1<l_2\}.$$
Let $ \#S$ denote the cardinality of the set $S$. Note that $\#LA_{d+2}=\binom{d+2}{2}\binom{d}{2}.$ In this way, we get a huge number of equations when  $d$ is big.  In the case of $(2,d)$-cc's we reduce the number of different equations to only two for every choice of $d\geq2$.  Let us denote by $S_d$ the group of the permutations on the set $\{3,...,d+2\}$.  If $x$ is a $(2,d)$-cc then $S_d$ is a subgroup of the symmetric group on the set $\mathcal{X}=\{x_1,...,x_{d+2}\}$.

By definition, in a $(2,d)$-cc we have $m_i=m_{\sigma{(i)}}$ and $R_{ij}=R_{\sigma(i)\sigma(j)}$, for all $\sigma \in S_{d}$ . To conclude that a $(2,d)$-cc is strongly symmetric with respect to $S_d$ we will show that $\triangle_{i}=\triangle_{\sigma(i)}$  $\forall$ $\sigma \in S_{d}$. We need the following Dziobek equations.

\begin{proposition}
If $x$ is a Dziobek configuration then there exists  a constant $\eta$ and  a nonzero constant $\tau$, such that

\begin{equation}\label{Eqdz}
R_{ij}-\eta=\tau\frac{\triangle_i}{m_i}\frac{\triangle_j}{m_j},    
\end{equation}

for which $\triangle_i$ is as in proposition $\ref{LA}$.
\end{proposition}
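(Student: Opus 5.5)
The plan is to derive the Dziobek equations directly from the central configuration equations by wedging with the dependency relation of the configuration. Since $x_1,\dots,x_{d+2}$ span $\mathbb{R}^d$, the $(d+1)\times(d+2)$ matrix with columns $(1,x_l)$ has rank $d+1$ and a one-dimensional kernel. By Cramer's rule this kernel is spanned by the vector $(\triangle_1,\dots,\triangle_{d+2})$, where $\triangle_l=(-1)^{l+1}w(x_l)$, so that
\[
\sum_{l}\triangle_l=0,\qquad \sum_l \triangle_l x_l=0,
\]
and not all $\triangle_l$ vanish. Any vector $(\mu_l)$ satisfying $\sum_l \mu_l=0$ and $\sum_l\mu_l x_l=0$ is therefore a scalar multiple of $(\triangle_l)$. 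This is the fact I will exploit.

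First rewrite the central configuration equations. Setting
\[
S_{ij}=R_{ij}-\eta \quad (i\neq j), \qquad \text{with } \eta \text{ chosen so that } \lambda=-M\eta ,
\]
we can use $\sum_j m_j x_j=Mc$ to turn $\gamma_i=\lambda(x_i-c)$ into $\sum_{j\neq i} m_j S_{ij}(x_j-x_i)=0$. Indeed, the extra term is
\[
-\eta\sum_j m_j(x_j-x_i)=-\eta M(c-x_i)=\lambda(c-x_i),
\]
which cancels the right-hand side. Now define $S_{ii}=-\sum_{j\neq i}m_jS_{ij}/m_i$. Then for each fixed $i$ the vector $\mu^{(i)}_j=m_jS_{ij}$ satisfies $\sum_j\mu^{(i)}_j=0$ and $\sum_j \mu^{(i)}_j x_j=0$. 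Hence $m_jS_{ij}=c_i\triangle_j$ for some constant $c_i$ and all $j$.

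Next use the symmetry $S_{ij}=S_{ji}$ for $i\neq j$. It gives $c_i\triangle_j/m_j=c_j\triangle_i/m_i$. I then want to conclude that $c_i=\tau\triangle_i/m_i$ with a single constant $\tau$, which yields $R_{ij}-\eta=\tau\frac{\triangle_i}{m_i}\frac{\triangle_j}{m_j}$.

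The main obstacle is to justify this step when some $\triangle_i$ vanish. I plan to handle it through the ratios. Pick $p$ with $\triangle_p\neq0$ and set $\tau=c_pm_p/\triangle_p$. For every $i\neq p$, the relation $c_i\triangle_p/m_p=c_p\triangle_i/m_i$ gives $c_i=\tau\triangle_i/m_i$, and this holds whether or not $\triangle_i=0$. Because $c_p$ itself only ever multiplies $\triangle_j$, the formula $m_jS_{ij}=c_i\triangle_j$ then follows for all $i\neq j$.

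It remains to show $\tau\neq0$. If $\tau=0$, then all $R_{ij}$ would equal $\eta$, so all mutual distances would be equal, which is impossible for $d+2$ points in $\mathbb{R}^d$. Also $\lambda\neq0$ is not needed here, since $\eta$ is simply defined as $-\lambda/M$. Finally, I will double-check the sign convention on $\triangle_l$ against the Cramer cofactor expansion; a global sign can be absorbed into $\tau$ anyway.
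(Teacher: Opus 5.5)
Your proof is correct and has the same skeleton as the paper's: absorb $\lambda$ into $\eta$, obtain $m_jS_{ij}=c_i\triangle_j$, use $S_{ij}=S_{ji}$ to get $c_i=\tau\triangle_i/m_i$, and exclude $\tau=0$. Where you differ is in how you obtain the constants $c_i$. The paper wedges the rewritten equations with the $(d-1)$-vectors $v^{i}_{l_1l_2}$ to get the pairwise relations $m_{l_1}(R_{il_1}-\eta)\triangle_{l_2}=m_{l_2}(R_{il_2}-\eta)\triangle_{l_1}$, and then reads off $c_i$ from the rank-one property of the $2\times(n-1)$ matrices $M_i$. You instead complete $(m_jS_{ij})_{j\neq i}$ with a diagonal entry so that it lies in the one-dimensional kernel of the matrix with columns $(1,x_l)$. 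That gives proportionality to $(\triangle_l)$ in one step, and is essentially Moeckel's original formulation.

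Your version is shorter and makes explicit two points the paper leaves tacit. First, the row $(\triangle_j/m_j)_{j\neq i}$ of $M_i$ is nonzero; this is automatic for you, since you work with the full kernel vector $(\triangle_l)\neq0$. Second, via the pivot $p$ with $\triangle_p\neq0$, a single $\tau$ exists even if some $\triangle_i$ vanish. The case $i=p$ is just the definition of $\tau$, so your remark about $c_p$ is superfluous. The paper's version, in exchange, produces the pairwise Dziobek relations themselves, which are the $\eta$-analogues of the Laura--Andoyer equations, so it fits the exterior-algebra framework of Proposition~\ref{LAG}.

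Your normalization $\eta=-\lambda/M$, with the masses kept in $\sum_{j\neq i}m_j(R_{ij}-\eta)(x_j-x_i)=0$, is the right one; the paper's display drops the $m_j$ and writes $\eta=\lambda/M$. Finally, like the paper, your last step tacitly uses $a\neq0$. Equal $R_{ij}$ force equal distances only then, and for $a=0$ one has $R_{ij}\equiv1=\eta$, so $\tau=0$.
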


\begin{proof}
In this proof, we follow \cite{M}, but our notation is different. By elementary computations, one can check that $\eqref{eqc}$ in equivalent to the following system of equations:

\begin{equation}
\sum_{\begin{subarray}{c}j=1\\ j\neq i\end{subarray}}^{n}\left(R_{ij}-\eta\right)(x_j-x_i)=0, \qquad 1\leq i \leq n,
\end{equation}

for which $\eta=\frac{\lambda}{M}$. Consider the $(d-1)-$ dimensional exterior product 

\begin{equation*}
v^{i}_{l_{1}l_{2}}=(x_1-x_i)\wedge...\wedge (x_{d+2}-x_i) \in \wedge^{d-1}\mathbb{R}^d,
\end{equation*}
in which $i$, $l_1$, and $l_2$ are three distinct indices from $1$ to $d+2$ and the terms  $(x_i-x_i),(x_{l_{1}}-x_i),(x_{l_{2}}-x_i)$ were omitted. By using properties of the exterior product
like the proposition \ref{LA}, we get the equations

\begin{equation}\label{msd}
m_{l_{1}}\left(R_{il_{1}}-\eta\right)\triangle_{l_{2}}=m_{l_2}\left(R_{il_{2}}-\eta\right)\triangle_{l_1}.
\end{equation}

for which $\triangle_{l}=(-1)^{(l+1)}w(x{_l})$. Note that, equations \eqref{msd} implies that the matrices 

$$M_i=\left(\begin{array}{cccccc}
    R_{i1}-\eta &                  \ldots & R_{i(i-1)}-\eta                   & R_{i(i+1)}-\eta                    & \ldots & R_{in}  -\eta \\
    \frac{\triangle_{1}}{m_1}   & \ldots & \frac{\triangle_{i-1}}{m_{i-1}}& \frac{\triangle_{i+1}}{m_{i+1}}  & \ldots & \frac{\triangle_{n}}{m_{n}}
\end{array}\right)$$

have rank $1$ for $i=1,\cdots,n.$ Hence, there exists constant $c_i$ for which 

$$ R_{ij}-\eta=c_i\frac{\triangle_{j}}{m_j}$$

with $1\leq i < j < n$. Since $R_{ij}=R_{ji},$ it follows that

$$c_i\frac{\triangle_{j}}{m_j}=c_j\frac{\triangle_{i}}{m_i}.$$

Then, for $i=1,\cdots, n$ the matrix

$$C_i=\left(\begin{array}{ccc}
    c_1 &                  \ldots &  c_n \\
    \frac{\triangle_{1}}{m_1}   & \ldots & \frac{\triangle_{n}}{m_{n}}
\end{array}\right)$$
also have rank $1$. Hence there is a constant $\tau$ such that $c_i=\tau\frac{\triangle_{i}}{m_i}$. This follows equations $\eqref{Eqdz}$.
We claim that $\tau\neq 0$. Otherwise, all $R_{ij}$ are equal and the configuration $x$ is a regular $(d-1)-$dimensional simplex. This contradicts $x$ being a Dziobek configuration.  \unskip\nobreak\hfill $\square$

\end{proof}

\begin{lemma}\label{Delta}
If $x$ is a configuration such that there are $d$ bodies with unit masses at the vertices of a regular $(d-1)$-dimensional simplex of unit edge length, and two more bodies with masses $s,k$ on the line passing through the center of the simplex and being orthogonal to it. Then $\triangle_{i}=\triangle_{\sigma(i)},$ $\forall~\sigma \in S_{d}.$
\end{lemma}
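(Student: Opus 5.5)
The plan is to avoid computing the $d\times d$ determinants $w(x_l)$ directly. Instead I will characterize the vector $(\triangle_1,\dots,\triangle_{d+2})$ intrinsically, as the essentially unique affine dependency among the points $x_1,\dots,x_{d+2}$. Then I will exhibit a dependency that is visibly invariant under $S_d$.

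\emph{Step 1: $\triangle$ is an affine dependency.} Consider the $(d+2)\times(d+2)$ matrix obtained from the configuration matrix (the row of ones stacked on the $d$ coordinate rows, $d+1$ rows in all) by appending a copy of one of its rows, say row $r$. This matrix has two equal rows, so its determinant is zero. Expanding that determinant along the appended row gives
$$\sum_{l=1}^{d+2}(-1)^{l+1}\,(\text{entry } r \text{ of column } l)\,w(x_l)=0,$$
up to a global sign independent of $l$. Letting $r$ run over all $d+1$ rows yields
$$\sum_l \triangle_l=0 \quad\text{and}\quad \sum_l \triangle_l x_l=0 .$$
The sign convention $\triangle_l=(-1)^{l+1}w(x_l)$ is exactly the alternating cofactor sign. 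This is the reason the sign is absorbed correctly, even though a transposition of simplex vertices changes the sign of an individual determinant $w(x_l)$.

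\emph{Step 2: uniqueness.} Since $x_1,\dots,x_{d+2}$ span $\mathbb{R}^d$ affinely, the $(d+1)\times(d+2)$ matrix has rank $d+1$. Its kernel, which is the space of affine dependencies $(c_l)$ with $\sum c_l=0$ and $\sum c_lx_l=0$, is therefore one-dimensional. Moreover $\triangle\neq 0$, because some maximal minor is nonzero. Hence every affine dependency is a scalar multiple of $\triangle$.

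\emph{Step 3: a symmetric dependency.} With $x_1=(z,0,\dots,0)$, $x_2=(w,0,\dots,0)$, $z\neq w$, and $x_{j+2}=(0,\delta_j)$ with $\sum_j\delta_j=0$, I look for a dependency of the form $(\alpha,\beta,1,\dots,1)$. The conditions are
$$\alpha+\beta+d=0 \quad\text{and}\quad \alpha z+\beta w=0 .$$
The remaining coordinates give $\sum_j\delta_j=0$ automatically, since the simplex is centered at the origin. This linear system has the solution
$$\alpha=\frac{dw}{z-w},\qquad \beta=-\frac{dz}{z-w},$$
which is valid because $z\neq w$. By Step 2, $\triangle=c\,(\alpha,\beta,1,\dots,1)$ for some $c\neq0$. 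In particular $\triangle_3=\dots=\triangle_{d+2}=c$, which gives $\triangle_i=\triangle_{\sigma(i)}$ for all $\sigma\in S_d$. Indices $1,2$ are fixed by $S_d$, so nothing is needed there.

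The main obstacle is Step 1: getting the cofactor signs exactly right, so that $\triangle_l$ (and not $w(x_l)$) is the dependency. I would verify this by writing the expansion explicitly as $\det$ of the matrix with the repeated row placed first, and checking that the index shift from removing column $l$ produces precisely $(-1)^{l+1}$. A second point to check is the spanning hypothesis used in Step 2: we need $z\neq w$ together with the $d$ simplex vertices, so that the points affinely span $\mathbb{R}^d$. This holds because the simplex spans the hyperplane $\{x^{(1)}=0\}$ and at least one of $z,w$ is nonzero (indeed both are, as noted earlier in the paper).
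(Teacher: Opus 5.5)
Your proof is correct, and it takes a genuinely different route from the paper's.

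\textbf{The paper's route.} The paper works from the Dziobek equations $R_{ij}-\eta=\tau\frac{\triangle_i}{m_i}\frac{\triangle_j}{m_j}$. For $i,j\in\{3,\dots,d+2\}$ one has $R_{ij}=1$ and $m_i=m_j=1$. Since $\tau\neq0$, all products $\triangle_i\triangle_j$ over simplex pairs are equal. Comparing $\triangle_i\triangle_{l_1}=\triangle_i\triangle_{l_2}$ and cancelling $\triangle_i\neq0$ then gives $\triangle_{l_1}=\triangle_{l_2}$.

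\textbf{Your route.} You identify $\triangle$ as the generator of the one-dimensional space of affine dependencies of $x_1,\dots,x_{d+2}$. Placing the repeated row first in the cofactor expansion gives exactly the sign $(-1)^{l+1}$, and a global sign would be harmless anyway. You then exhibit the visibly $S_d$-invariant dependency $\bigl(\frac{dw}{z-w},-\frac{dz}{z-w},1,\dots,1\bigr)$.

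\textbf{What your approach buys.}
\begin{itemize}
\item It uses only the geometry, so it proves the lemma as literally stated, for any configuration of this shape. The paper's use of the Dziobek equations tacitly requires $x$ to be a central configuration.
\item It does not need the unproved claim that every $\triangle_i\neq0$; only $\triangle\neq0$ as a vector, which follows from the rank.
\item It covers $d=2$. There the paper's cancellation step is vacuous, since no third simplex index $i\in\{3,\dots,d+2\}$ distinct from $l_1,l_2$ exists. One would have to patch it with, e.g., $R_{13}=R_{14}$ and $\triangle_1\neq0$.
\item It gives the explicit ratios $\triangle_1:\triangle_2:\triangle_j=dw:-dz:(z-w)$. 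The paper recomputes these by hand, via determinant manipulations, when it derives the final two-equation system in Section 3.
\end{itemize}

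\textbf{What the paper's approach buys.} It is a two-line consequence of the Dziobek relations it has just established, and it fits its theme of exploiting those relations for symmetric central configurations. It is, however, less general and, as written, incomplete for $d=2$.
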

\begin{proof}It is easy to check that $\triangle_i\neq 0$ for $i=1,...,d+2$.
From the Dziobek equations we get $$\triangle_{i}\triangle_{j}=\triangle_{l_1}\triangle_{l_2},$$
for all $i,j,l_1,l_2\in\{3,...,d+2\}$ with $i<j$ and $l_1<l_2$.

Hence,
 $$\triangle_i(\triangle_{l_1}-\triangle_{l_2})=0,$$ 
 for all $i,l_1,l_2\in\{3,...,d+2\}$ such that $i\neq l_1$, $i\neq l_2$ and $l_1<l_2$. Since $\triangle_{i}\neq 0$ for all $i=3,...,d+2$ the result is proved. \unskip\nobreak\hfill $\square$
 \end{proof}

Note that, by definition a $(2,d)$-cc satisfies the geometric conditions of lemma $\ref{Delta}.$ Then, we have the following:

\begin{lemma}
$L_{ijl_1l_2}=L_{\sigma{(i)}\sigma{(j)}\sigma{(l_1)}\sigma{(l_2)}}$ for all $L_{ijl_1l_2}\in LA_{d+2}.$
\end{lemma}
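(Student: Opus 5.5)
We prove the statement by direct substitution into the formula of Proposition \ref{LA}, using the three invariances available for a $(2,d)$-cc under any $\sigma\in S_d$. Here $S_d$ permutes only the indices $\{3,\dots,d+2\}$ and fixes $1$ and $2$.

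\textbf{Step 1 (the ingredients are invariant).} For $\sigma\in S_d$ we have
\[
m_{\sigma(l)}=m_l, \qquad R_{\sigma(i)\sigma(l)}=R_{il}, \qquad \triangle_{\sigma(l)}=\triangle_l .
\]
The mass identity holds because all bodies $3,\dots,d+2$ have unit mass while $1,2$ are fixed. The distance identity holds because a permutation of the vertices of a regular simplex is induced by an isometry of $\mathbb{R}^{d-1}$ fixing its center. This isometry extends to an isometry of $\mathbb{R}^d$ fixing the axis through $x_1,x_2$, so every mutual distance is preserved. The identity for $\triangle$ is Lemma \ref{Delta}, which applies since a $(2,d)$-cc satisfies its geometric hypotheses.

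\textbf{Step 2 (substitute).} Writing out
\[
L_{\sigma(i)\sigma(j)\sigma(l_1)\sigma(l_2)}=m_{\sigma(l_1)}\bigl(R_{\sigma(i)\sigma(l_1)}-R_{\sigma(j)\sigma(l_1)}\bigr)\triangle_{\sigma(l_2)}-m_{\sigma(l_2)}\bigl(R_{\sigma(i)\sigma(l_2)}-R_{\sigma(j)\sigma(l_2)}\bigr)\triangle_{\sigma(l_1)}
\]
and applying Step 1 term by term gives exactly $L_{ijl_1l_2}$.

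\textbf{Step 3 (ordering conventions).} Proposition \ref{LA} indexes the equations with $i<j$ and $l_1<l_2$, and $\sigma$ may reverse either order. So we extend the definition of $L$ to all orderings by the same formula and record two facts.

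First, swapping $i$ and $j$ changes the sign of $L$, since $L$ is linear in $(R_{i\cdot}-R_{j\cdot})$. Second, swapping $l_1$ and $l_2$ also changes the sign: the two terms are interchanged with opposite signs, and $\triangle_l$ depends only on $l$, not on the order.

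Hence $L_{\sigma(i)\sigma(j)\sigma(l_1)\sigma(l_2)}$ equals $\pm$ the normalized element of $LA_{d+2}$, and with the extended convention it equals $L_{ijl_1l_2}$ exactly. In either reading the vanishing of one is equivalent to the vanishing of the other, which is all that is needed for the symmetry reduction.

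The main obstacle is this bookkeeping in Step 3. One point needs care there: the sign $(-1)^{l+1}$ built into $\triangle_l$ is index dependent, so the equality $\triangle_{\sigma(l)}=\triangle_l$ must come from Lemma \ref{Delta} and not from the determinants $w(x_l)$ alone.
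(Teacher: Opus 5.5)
Your proof is correct and is essentially the paper's own argument: invariance of $m_l$ and $R_{il}$ under $S_d$, together with $\triangle_{\sigma(l)}=\triangle_l$ from Lemma \ref{Delta}, substituted term by term into the formula for $L_{ijl_1l_2}$. Your Step 3 on re-sorting indices (each swap of $i,j$ or of $l_1,l_2$ flips the sign of $L$) makes explicit a point the paper leaves implicit. Your closing claim that $\triangle_{\sigma(l)}=\triangle_l$ must come from Lemma \ref{Delta} is overstated, since it also follows directly from geometry: a transposition of two simplex vertices is realized by a reflection fixing $x_1,x_2$, and its determinant $-1$ exactly cancels the change in $(-1)^{l+1}$.
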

%\begin{proof}
%By the lemma $\ref{Delta}$  we have
%\begin{align*}
%&m_{\sigma(l_1)}(R_{\sigma(i)\sigma(l_1)}- R_{\sigma(j)\sigma(l_1)})\triangle_{\sigma(l_2)}-m_{\sigma(l_2)}(R_{\sigma(i)\sigma(l_2)}-R_{\sigma(j)\sigma(l_2)})\triangle_{\sigma(l_1)}= \\
%&m_{l_1}(R_{il_1}-R_{jl_1})\triangle_{l_2}-m_{l_2}(R_{il_2}-R_{jl_2})\triangle_{l_1}.
%\end{align*}
%This establishes the formula.  \unskip\nobreak\hfill $\square$
%\end{proof}

  We denote the orbits of of the action of $S_d$ on $LA_{d+2}$ by $$\mathcal{O}(L_{ijl_1l_2})=\{L_{\sigma(i)\sigma(j)\sigma(l_1)\sigma(l_2)}:\sigma \in S_d\},$$ and the set of the respective orbits by $LA_{d+2}/S_d$.

\begin{proposition} If $d\geq 2$, the only two nontrivial equations of $LA_{d+2}$ are $L_{1324}$ and $L_{2314}.$
\end{proposition}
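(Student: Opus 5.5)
We classify the orbits of $S_d$ acting on $LA_{d+2}$ and show that all but two of them vanish identically on any configuration of the type in Lemma~\ref{Delta}. By the previous lemma, $L_{ijl_1l_2}$ is constant on orbits, so it suffices to examine one representative per orbit. An index quadruple $(i,j,l_1,l_2)$, with $i<j$ and $l_1<l_2$, is determined up to $S_d$ by where the special indices $1,2$ sit and how many indices lie in $\{3,\dots,d+2\}$. Writing $\triangle$ for the common value $\triangle_3=\cdots=\triangle_{d+2}$ (Lemma~\ref{Delta}), and using $m_1=s$, $m_2=k$, $m_l=1$ for $l\ge 3$, we list the cases.

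\textbf{Both $1,2$ among $\{l_1,l_2\}$.} The representative is $L_{3412}$. It contains $R_{31}-R_{41}$ and $R_{32}-R_{42}$, and both vanish because each of $x_1,x_2$ is equidistant from all vertices of the simplex. So this orbit is trivial.

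\textbf{Both $1,2$ among $\{i,j\}$.} The representative is $L_{1234}$, which equals $(R_{13}-R_{23})\triangle-(R_{14}-R_{24})\triangle$. Since $R_{13}=R_{14}$ and $R_{23}=R_{24}$, it vanishes.

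\textbf{None of $1,2$ appears.} This case requires $d\ge4$. The representative is $L_{3456}$. Every $R$ involved equals $1$, so every difference vanishes.

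\textbf{Exactly one of $1,2$ appears, in $\{i,j\}$.} Say $i=1$ and $j,l_1,l_2\ge3$. Then $L_{1j l_1 l_2}=(R_{1l_1}-1)\triangle-(R_{1l_2}-1)\triangle=0$. The same holds with $2$ in place of $1$.

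\textbf{Exactly one of $1,2$ appears, in $\{l_1,l_2\}$.} Say $l_1=1$ and $i,j,l_2\ge 3$. Then $L_{ij1l_2}=s(R_{i1}-R_{j1})\triangle-(R_{il_2}-R_{jl_2})\triangle_1=0$, since $R_{i1}=R_{j1}$ and $R_{il_2}=R_{jl_2}=1$.

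\textbf{One of $1,2$ in $\{i,j\}$ and the other in $\{l_1,l_2\}$.} Two orbits remain. With $1\in\{i,j\}$ and $2\in\{l_1,l_2\}$, after ordering the indices the representative is $L_{1324}$: it has $i=1$, $j=3$, $l_1=2$, $l_2=4$. With the roles swapped, the representative is $L_{2314}$. Expanding $L_{1324}$ with the Dziobek-free data gives $k(R_{12}-R_{23})\triangle-(R_{14}-1)\triangle_2$, and $L_{2314}$ similarly gives $s(R_{12}-R_{13})\triangle-(R_{24}-1)\triangle_1$. Neither is identically zero: they depend on $z,w$ and yield \eqref{algsys} once the $\triangle$'s are computed. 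This case is exactly where the two nontrivial equations arise.

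The main obstacle is bookkeeping, not any deep step. First, the orderings $i<j$ and $l_1<l_2$ must be respected, and the sign convention $\triangle_l=(-1)^{l+1}w(x_l)$ must be tracked; this may reverse the roles of the $\triangle$'s but does not create new orbits. Second, we must check that each representative exists whenever $d\ge2$. The case requiring four simplex indices needs $d\ge4$, and cases that need more simplex indices than are available simply do not occur. For $L_{1324}$ and $L_{2314}$ only two simplex indices are used, so both exist for all $d\ge2$. Finally, we will confirm that all orbits of the action are accounted for by the distribution of the indices $1,2$, which completes the proof.
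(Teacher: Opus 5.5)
Your proposal is correct and follows essentially the paper's route: decompose $LA_{d+2}$ into $S_d$-orbits, check one representative of each, and find that only $L_{1324}$ and $L_{2314}$ survive. The only difference is how completeness is established. You split on where the indices $1,2$ sit, which is exhaustive by construction and also handles $d=2,3$ uniformly. The paper instead lists nine orbits and checks that their sizes sum to $\binom{d+2}{2}\binom{d}{2}$; for that count to work, the orbit of $L_{3456}$ must have size $\binom{d}{2}\binom{d-2}{2}$, not the $\binom{d}{2}\binom{d}{2}$ printed there.
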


\begin{proof} 

If $d=2$, $\# LA_4=2$ and if $d=3$, $\# LA_5=30$. It is easy to check directly that in this cases the only two nontrivial equations are $L_{1324}$ and $L_{2314}.$ If $d\geq 4$ we claim that 

\begin{align*} 
LA_{d+2}/S_d=&\{\mathcal{O}(L_{3456}),\mathcal{O}(L_{3412}),\mathcal{O}(L_{3415}),\mathcal{O}(L_{3425}),\mathcal{O}(L_{1345}),\mathcal{O}(L_{2345}),\\
&\mathcal{O}(L_{1234}),\mathcal{O}(L_{1324}),\mathcal{O}(L_{2314})\}.
\end{align*}
In fact, we have
\begin{align*}
 \#\mathcal{O}(L_{3456})=\binom{d}{2}\binom{d}{2}, \qquad & \#\mathcal{O}(L_{3412})=\binom{d}{2}, &\#\mathcal{O}(L_{3415})=(d-2)\binom{d}{2},\\
 \#\mathcal{O}(L_{3425})=(d-2)\binom{d}{2},\qquad &\#\mathcal{O}(L_{1345})=d\binom{d-1}{2},&\#\mathcal{O}(L_{2345})=d\binom{d-1}{2},\\
 \#\mathcal{O}(L_{1234})=\binom{d}{2},\qquad& \#\mathcal{O}(L_{1324})=d(d-1),&\#\mathcal{O}(L_{2314})=d(d-1).
\end{align*}
The sum of the cardinalities of the orbits described above  is equal to $\binom{d+2}{2}\binom{d}{2}=\#LA_{d+2}$, which proves our claim. Checking case by case directly, we get that the only nontrivial equations are
$$L_{1324}=m_2(R_{12}-R_{23})\triangle_{4}-m_4(R_{14}-R_{34})\triangle_{2}=0,$$
$$L_{2314}=m_1(R_{12}-R_{13})\triangle_{4}-m_4(R_{24}-R_{34})\triangle_{1}=0.$$  \unskip\nobreak\hfill $\square$
\end{proof}

We notice that equations $L_{1324}$ and $L_{2314}$ are typical in the theory of Dziobek central configurations. For example, they appear in the works of  Dzoiobek \cite{OD} and Schmidt \cite{S}, among others. Our procedure differs from our predecessors since it uses the group symmetry of the $(2,d)-$cc's to obtain only two equations that describe the class of central configurations studied. Combined with the proposition \ref{LAG}, this approach can be used to obtain adequate Laura-Andoyer systems for classes of symmetrical central configurations with $d+h$ bodies and dimension $d$ with $h>2.$ 

Now, we are ready to derive our polynomial system form $L_{1324},L_{2314}$.

\begin{proposition}
Under the assumptions above, $L_{1324}=L_{2314}=0$ becomes 
\begin{equation}\label{algsys}
  \begin{cases}
   k((z-w)^{2a}-(\frac{d-1}{2d}+w^{2})^{a})(z-w)+((\frac{d-1}{2d}+z^{2})^{a}-1)zd=0,\\
   s((z-w)^{2a}-(\frac{d-1}{2d}+z^{2})^{a})(z-w)-((\frac{d-1}{2d}+w^{2})^{a}-1)wd=0.\\
   \end{cases}
\end{equation}

\end{proposition}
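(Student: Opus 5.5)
We substitute the explicit $(2,d)$ geometry into the two surviving Laura-Andoyer equations $L_{1324}$ and $L_{2314}$. The key point is that the determinants $\triangle_1,\triangle_2,\triangle_4$ only enter through the ratios $\triangle_2/\triangle_4$ and $\triangle_1/\triangle_4$, and these can be computed explicitly.

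\textbf{Step 1: the mutual distances.} With $x_1=(z,0,\dots,0)$, $x_2=(w,0,\dots,0)$ and $x_{j+2}=(0,\delta_j)$, where $|\delta_j|^2=\frac{d-1}{2d}$ is the squared circumradius of the unit regular $(d-1)$-simplex, we get
\[
r_{12}=z-w,\qquad r_{1j}^2=\tfrac{d-1}{2d}+z^2,\qquad r_{2j}^2=\tfrac{d-1}{2d}+w^2\quad (j\ge 3),\qquad r_{34}=1.
\]
Hence
\[
R_{12}=(z-w)^{2a},\quad R_{14}=R_{13}=\left(\tfrac{d-1}{2d}+z^2\right)^a,\quad R_{23}=R_{24}=\left(\tfrac{d-1}{2d}+w^2\right)^a,\quad R_{34}=1.
\]

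\textbf{Step 2: the ratios of the $\triangle$'s.} Here $w(x_l)$ is the $(d+1)\times(d+1)$ determinant whose columns are $(1,x_i)$ for $i\neq l$. I will compute $w(x_1)$, $w(x_2)$ and $w(x_4)$ by expanding along the first coordinate row.

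For $w(x_1)$, the first-coordinate row is $(w,0,\dots,0)$, so only $x_2$ contributes. This gives $w(x_1)=\pm w\,V$, where $V$ is the determinant of the $d$ simplex vertices $(1,\delta_j)$, proportional to the simplex volume. Likewise $w(x_2)=\pm zV$.

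For $w(x_4)$, the columns are $x_1$, $x_2$ and the $d-1$ vertices other than $x_4$. Expanding along the first-coordinate row gives two terms, $z\cdot D_2-w\cdot D_1$ up to signs. Here $D_1,D_2$ are the determinants of $\{(1,0),(1,\delta_j)\}_{j\neq 2}$, i.e.\ the centre together with $d-1$ vertices. By symmetry this equals $V/d$, since the centre splits the simplex into $d$ congruent pieces.

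So $w(x_4)=\pm(z-w)V/d$. Equivalently, $\triangle_1:\triangle_2:\triangle_4$ is proportional to $w:(-z):(z-w)/d$ up to a uniform convention. I would fix the signs by the factor $(-1)^{l+1}$ and the column orders, and check them against the $d=2$ case directly. Lemma~\ref{Delta} guarantees that the choice of index $4$ is immaterial.

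\textbf{Step 3: substitution.} Put these into $L_{1324}=m_2(R_{12}-R_{23})\triangle_4-m_4(R_{14}-R_{34})\triangle_2$ with $m_2=k$ and $m_4=1$. Multiplying by $d/V$ gives
\[
k\left((z-w)^{2a}-\left(\tfrac{d-1}{2d}+w^2\right)^a\right)(z-w)\pm d\,z\left(\left(\tfrac{d-1}{2d}+z^2\right)^a-1\right),
\]
which is the first equation once the sign is $+$. Similarly, $L_{2314}$ with $m_1=s$ and $\triangle_1\propto w$ yields the second equation, with the minus sign in front of $wd$.

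\textbf{Main obstacle.} The main obstacle is the sign bookkeeping in Step 2. This includes the sign of the cofactor expansion, the factor $(-1)^{l+1}$, and the relative sign of $z$ versus $w$ in $w(x_4)$. I would verify all of these by computing explicitly in $d=2$ (four bodies in the plane), then argue that the sign pattern is independent of $d$ because the expansions have the same structure.
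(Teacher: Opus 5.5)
Your proposal follows essentially the same route as the paper's proof: the same mutual distances, the same cofactor expansions of $|X_1|,|X_2|,|X_4|$ reduced to the single determinant $D=|\delta_1,\delta_3,\dots,\delta_d|$ via $\sum_j\delta_j=0$, and direct substitution into $L_{1324}$ and $L_{2314}$. The sign bookkeeping you defer does not depend on $d$: the paper gets $\triangle_4=(z-w)D$, $\triangle_2=-zdD$, $\triangle_1=wdD$ by column operations valid for every $d$, which confirms your ratio $w:(-z):(z-w)/d$, so the separate $d=2$ check and the independence argument are unnecessary.
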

Note this system generalizes the systems in \cite{T} for $d=2,3$.
\begin{proof} 
Recall that, for a Dziobek configuration $x=(x_1,...,x_{d+2})$ where $x_{i}\in \mathbb{R}^{d}$. The matrix of the configuration is

$$X=\left(\begin{array}{ccc}
    1 & \ldots & 1 \\
    x_{11} & \ldots & x_{1(d+2)} \\
    \vdots &\ldots& \vdots\\
    x_{d1} & \ldots & x_{d(d+2)}\\
    0 & \cdots &0
    \end{array}\right).$$

And, $\triangle_{k}=(-1)^{(k+1)}|X_{k}|$, where $X_{k}$ is the submatrix obtained from $X$ by removing the $k$-th column and the last row. The determinant $|X_{k}|$ is $d!$ times the signed volume of the $d$-dimensional convex hull determined by all points except for the $k$-body. 

Therefore, we compute $\triangle_{4},\triangle_{2},\triangle_{1}$, respectively as follow.  
\[
\begin{aligned}
&(-1)^{4+1}\begin{vmatrix}

    1 &1&1&1& \dots & 1 \\
    z &w&0&0& \dots & 0 \\
    0 &0&\vdots&\vdots& \dots & \vdots\\
   \vdots &\vdots&\delta_{1}&\delta_{3}& \dots & \delta_{d}\\
    0 &0&\vdots&\vdots& \dots & \vdots\\
    \end{vmatrix}=(-1)\begin{vmatrix}

    1 &0&1&1& \dots & 1 \\
    z &w-z&0&0& \dots & 0 \\
    0 &0&\vdots&\vdots& \dots & \vdots\\
   \vdots &\vdots&\delta_{1}&\delta_{3}& \dots & \delta_{d}\\
    0 &0&\vdots&\vdots& \dots & \vdots\\
    \end{vmatrix}=(z-w)\begin{vmatrix}
    1 &1&1& \dots & 1 \\
    0 &\vdots&\vdots& \dots & \vdots\\
   \vdots &\delta_{1}&\delta_{3}& \dots & \delta_{d}\\
    0 &\vdots&\vdots& \dots & \vdots\\
    \end{vmatrix} \\
    &=(z-w)\begin{vmatrix}
   \delta_{1}&\delta_{3}& \dots & \delta_{d}\\
    \end{vmatrix}.
\end{aligned}
\]

\[
\begin{aligned}
&(-1)^{2+1}\begin{vmatrix}
    1 &1&1& \dots & 1 \\
    z &0&0& \dots & 0 \\
    0 &\vdots&\vdots& \dots & \vdots\\
   \vdots &\delta_{1}&\delta_{2}& \dots & \delta_{d}\\
    0 &\vdots&\vdots& \dots & \vdots\\
    \end{vmatrix}=z\begin{vmatrix}
    1&1& \dots & 1 \\
   \delta_{1}&\delta_{2}& \dots & \delta_{d}\\
    \end{vmatrix}=z\begin{vmatrix}
    0&1&0& \dots & 0 \\
   \delta_{1}-\delta_{2}&\delta_{2}& \delta_{3}-\delta_{2}& \dots & \delta_{d}-\delta_{2}\\
    \end{vmatrix} \\
    &=(-z)\begin{vmatrix}
   \delta_{1}-\delta_{2}&\delta_{3}-\delta_{2}& \dots & \delta_{d}-\delta_{2}\\
    \end{vmatrix}.
\end{aligned}
\]

\[
\begin{aligned}
&(-1)^{1+1}\begin{vmatrix}
    1 &1&1& \dots & 1 \\
    w &0&0& \dots & 0 \\
    0 &\vdots&\vdots& \dots & \vdots\\
   \vdots &\delta_{1}&\delta_{2}& \dots & \delta_{d}\\
    0 &\vdots&\vdots& \dots & \vdots\\
    \end{vmatrix}=(-w)\begin{vmatrix}
    1&1& \dots & 1 \\
   \delta_{1}&\delta_{2}& \dots & \delta_{d}\\
    \end{vmatrix}=z\begin{vmatrix}
    0&1&0& \dots & 0 \\
   \delta_{1}-\delta_{2}&\delta_{2}& \delta_{3}-\delta_{2}& \dots & \delta_{d}-\delta_{2}\\
    \end{vmatrix} \\
    &=w\begin{vmatrix}
   \delta_{1}-\delta_{2}&\delta_{3}-\delta_{2}& \dots & \delta_{d}-\delta_{2}\\
    \end{vmatrix}.
\end{aligned}
\]

Now, $\begin{vmatrix}
   \delta_{1}-\delta_{2}&\delta_{3}-\delta_{2}& \dots & \delta_{d}-\delta_{2}\\
    \end{vmatrix}$ is $(d-1)!$ times the signed volume of the $(d-1)$-dimensional regular simplex with unit side length. Since the center of the simplex is at the origin, $\begin{vmatrix}
   \delta_{1}&\delta_{3}& \dots & \delta_{d}\\
    \end{vmatrix}$ is just $\frac{1}{d}$ of $\begin{vmatrix}
   \delta_{1}-\delta_{2}&\delta_{3}-\delta_{2}& \dots & \delta_{d}-\delta_{2}\\
    \end{vmatrix}$. In conclusion, we have $\triangle_{4}=(z-w)\begin{vmatrix}
   \delta_{1}&\delta_{3}& \dots & \delta_{d}\\
    \end{vmatrix},\triangle_{2}=(-zd)\begin{vmatrix}
   \delta_{1}&\delta_{3}& \dots & \delta_{d}\\
    \end{vmatrix}$, and $\triangle_{1}=wd\begin{vmatrix}
   \delta_{1}&\delta_{3}& \dots & \delta_{d}\\
    \end{vmatrix}. $ 

Next, using the fact that  the circumradius of the regular $(d-1)$-simplex with unit edge length in $\mathbb{R}^{d-1}$ is $\sqrt{\frac{d-1}{2d}}$, we have $R_{13}=R_{14}=(\frac{d-1}{2d}+z^{2})^{a}$, $R_{23}=R_{24}=(\frac{d-1}{2d}+w^{2})^{a}$. Since $z>w$, we have $R_{12}=(z-w)^{2a}$. Since the simplex has unit edge length, $R_{34}=1$. With all these information together with $m_{1}=s, m_{2}=k, m_{4}=1$, we obtain system (\ref{algsys}) from $L_{1324}=L_{2314}=0$.  \unskip\nobreak\hfill $\square$
\end{proof}

As a final remark, we observe that our proof is much longer than the one given by Leandro in \cite{L}. Despite that,  we include it in our work because we believe that it works to simplify central configuration equations in cases with symmetric central configurations with dimension $d\geq 2$ and $d+h$ bodies with $h \geq 3$, and symmetry group more complicated than $S_n$. In future work, the authors pretend to explore some of these cases.

\begin{acknowledgements}
\thanks{ We would like to express our sincere gratitude to the anonymous referees for their careful reading of our manuscript and their many insightful comments, questions and suggestions. This research was partly supported by the Ministry of Science and Technology of Taiwan under the grants  MOST 108-2115-M-005-004-MY2 (Ya-Lun Tsai) and MOST 107-2811-M-007-004 (Thiago Dias).}
\end{acknowledgements}

% BibTeX users please use one of
%\bibliographystyle{spbasic}      % basic style, author-year citations
%\bibliographystyle{spmpsci}      % mathematics and physical sciences
%\bibliographystyle{spphys}       % APS-like style for physics
%\bibliography{}   % name your BibTeX data base

\begin{thebibliography}{99}
%
% and use \bibitem to create references. Consult the Instructions
% for authors for reference list style.
%

%\bibitem{RefJ}
% Format for Journal Reference
%Author, Article title, Journal, Volume, page numbers (year)
% Format for books
%\bibitem{RefB}
%Author, Book title, page numbers. Publisher, place (year)
% etc

\bibitem{A}
A. Albouy, 
\emph{On a paper of Moeckel on central configurations}, 
Regular and Chaotic Dynamics \textbf{8}, no. 2, 133-142 (2003)

\bibitem{A2}
A. Albouy,  Y. Fu, and S. Sun, 
\emph{Symmetry of planar four-body convex central configurations}, 
Proceedings of the Royal Society A: Mathematical, Physical and Engineering Sciences \textbf{464}, no 2093,  1355-1365 (2008)

\bibitem{AC}
A. Chenciner, 
\emph{Are nonsymmetric balanced configurations of four equal masses virtual or real?}, 
Regular and Chaotic Dynamics \textbf{22}, issue 6, 677-687 (2017)

\bibitem{AC1}
A. Chenciner, 
\emph{Symmetric $4$-body balanced configurations: the equal mass case}, 
prinprint (2016)

\bibitem{ACS}
A. Albouy, H. E. Cabral, A. A. Santos,
\emph{Some problems on the classical $n$-body problem}, 
Celestial Mech. Dynam. Astronom.  \textbf{113},  no. 4, 369–375 (2012)

\bibitem{ASV}
M. Alvarez-Ramírez, A. A. Santos, C. Vidal,
\emph{On Co-Circular Central Configurations in the Four and Five Body-Problems for Homogeneous Force Law}
J. Dyn. Diff. Equat. \textbf{25} 269–290 (2013)

\bibitem{BPR}
S. Basu, R. Pollack, M.-F. Roy, 
\emph{Algorithms in Real Algebraic Geometry, Algorithms},
Comput. Math. 10, Springer-Verlag, Berlin (2003)

\bibitem{C}
G.E. Collins, 
\emph{Quantifier elimination for real closed fields by cylindrical algebraic decomposition}.  Automata theory and formal languages (Second GI Conf., Kaiserslautern, 1975),  134–183. Lecture Notes in Comput. Sci., Vol. 33, Springer, Berlin (1975)

\bibitem{CK}
G.E. Collins, W. Krandick,
\emph{An efficient algorithm for infallible polynomial complex root isolation}
In: Wang, Paul S.(Ed.): Proceedings of ISSAC'92, 189-194 (1992)

\bibitem{CLO}
D. Cox, J. Little, D. O'Shea,
\emph{Ideals, Varieties and Algorithms, an Introduction to Computational Algebraic Geometry and Commutative Algebra},
Undergrad. Texts Math., Springer, New York (1992)

\bibitem{D}
T. Dias,
\emph{New equations for central configurations and generic finiteness}, 
Proc. Amer. Math. Soc. \textbf{145}, no. 7, 3069–3084 (2017) 


\bibitem{OD}
 O. Dziobek, 
 \emph{Ueber einen merkwürdigen Fall des Vielkörperproblems.} 
 Astronomische Nachrichten \textbf{152}, issue 3, 33-46. (1900)

\bibitem{FM}
A. C. Fernandes,  L. F. Mello. 
\emph{Andoyer equations for noncollinear planar central configurations},
Turkish Journal of Mathematics, \textbf{41}, no 3, 515-523 (2017)

\bibitem{H}
Y. Hagihara,
\emph{Celestial Mechanics},
V 1 chap. 3. The MIT Press, Cambridge, MA (1970)

\bibitem{HM1}
M. Hampton, R. Moeckel,
\emph{Finiteness of relative equilibria of the four-body problem},
Inv. Math., \textbf{163}, 289-312 (2006).

\bibitem{HM2}
M. Hampton, R. Moeckel,
\emph{Finiteness of stationary configurations of the four-vortex problem},
Trans. Amer. Math. Soc., \textbf{361}, no. 3, 1317–1332 (2009).

\bibitem{HS}
M. Hampton, and M. Santoprete. 
\emph{Seven-body central configurations: a family of central configurations in the spatial seven-body problem}, 
Celestial Mechanics and Dynamical Astronomy, \textbf{99}, 293-305 (2007) 


\bibitem{L}
E. S. G. Leandro,
\emph{Finiteness and bifurcations of some symmetrical classes of central configurations},
Arch. Ration. Mech. Anal., \textbf{167},  no. 2, 147-177 (2003).

\bibitem{L1}
E. S. G. Leandro,
\emph{On the Dziobek configurations of the restricted $(N+1)$-body problem with equal masses},
Discrete and Continuous Dynamical Systems, \textbf{1},  no. 4, 589–595 (2008). 

\bibitem{L2}
J. Llibre,
\emph{A note on the Dziobek central configurations
},
Proc. Amer. Math. Soc., \textbf{143}, no. 8, 3587–3591  (2015).

\bibitem{ME}
G. Meyer,  
\emph{Solutions voisines des solutions de Lagrange dans le probleme des n corps}, 
 Annales de l'Observatoire de Bordeaux \textbf{17}, 77-252 (1933)

\bibitem{M}
R. Moeckel,
\emph{Generic finiteness for Dziobek configurations}, 
Trans. Amer. Math. Soc. \textbf{353}, no. 11, 4673–4686 (2001). 

\bibitem{M2}
R. Moeckel,
\emph{Central configurations}.  
Central configurations, periodic orbits, and Hamiltonian systems,  105–167, 
Adv. Courses Math. CRM Barcelona, Birkhäuser/Springer, Basel (2015).

\bibitem{O}
K. A. O’Neil, 
\emph{Stationary configurations of point vortices}, 
Trans. Amer. Math. Soc. \textbf{302}, no. 2, 383–425 (1987).


\bibitem{B}
B. Pan,
\emph{Some case studies of central configurations and central measures},
Phd Thesis, Department of Mathematics, National Tsing Hua University (2017) \url{https://hdl.handle.net/11296/c6vg2x}
\bibitem{S}


D. Saari,
\emph{Central configurations, a problem for the 21st century}
https://www.math.uci.edu/~dsaari/BAMA-pap.pdf


\bibitem{S}
D. Schmidt, 
\emph{Central configurations in $\mathbb{R}^2$ and $\mathbb{R}^3$.} 
Contemp. Math \textbf{81}, 59-76 (1988).

\bibitem{SS}
S. Smale, 
\emph{Mathematical problems for the next century} 
Mathematical Intelligencer \textbf{20}, 7–15 (1998).

\bibitem{SMPV}
A. A. Santos, M. Marchesin, E. Pérez-Chavela, C. Vidal,
\emph{Continuation and bifurcations of concave central configurations in the four and five body-problems for homogeneous force laws}
J. Math. Anal. Appl.  \textbf{446}, 1743–1768 (2017).

\bibitem{T}
Y. Tsai,
\emph{Some enumeration problems on central configurations at the bifurcation points},
Acta Appl. Math., \textbf{155}, 99-112 (2018).

\bibitem{T2}
Y. Tsai,
\emph{Dziobek configurations of the restricted (N + 1)-body problem with equal masses},
J. Math. Phys., \textbf{53}, 072902 (2012). 

\bibitem{link}
Mathematica notebook,
\emph{Generalized Laura-Andoyer equations and the enumeration of some symmetrical classes of Dziobek configurations.nb}, https://github.com/thiagodiasoliveira/BifurcationSurface


\bibitem{link1}
Mathematica tutorial,
\emph{Gr\"obner basis computation}, https://reference.wolfram.com/language/tutorial/ComplexPolynomialSystems.html

\bibitem{link2}
Mathematica tutorial,
\emph{cylindrical algebraic decomposition}, https://reference.wolfram.com/language/tutorial/RealPolynomialSystems.html

\end{thebibliography}

% Non-BibTeX users please use

\end{document}